\newtheorem{theorem}{Theorem}
\newtheorem{lemma}{Lemma}[section]
\newtheorem{proposition}[lemma]{Proposition}
\newtheorem{corollary}[lemma]{Corollary}
\newtheorem{remark}[lemma]{Remark}
\makeatletter \@addtoreset{equation}{section} \makeatother
\newcommand{\dist}{\mathrm{dist}}
\newcommand{\R}{\mathbb{R}}
\newcommand{\e}{\varepsilon}
\newcommand{\rd}{\mathrm{d}}
\DeclareMathOperator{\supp}{supp}
\title
	{\bf  A New Approach to Solving Singularly Perturbed NLS at Local Potential Maxima}
  \author{Chengxiang Zhang\thanks{  zcx@bnu.edu.cn, supported by NSFC-12371107}}
  \affil{\footnotesize School of Mathematical Sciences, Laboratory of Mathematics and Complex Systems, MOE, Beijing Normal University, 100875 Beijing, People's Republic of China}
  \date{}
\begin{document}
 \maketitle
 \begin{minipage}{14cm}
  {\small {\bf Abstract:}
  This paper presents a new approach for addressing the singularly perturbed nonlinear Schr\"odinger (NLS) equation:
   \[
    -\e^2\Delta v + V(x) v =f(v),\ v>0,\ \lim_{|x|\to \infty} v(x)=0,
   \]
   where 
    $V$ possesses a local maximum point and $f$  satisfies the Berestycki-Lions conditions.
    The key to our approach is the derivation of a refined lower bound on the gradient norm.
  
  \medskip {\bf Keywords:}  Nonlinear Schr\"{o}dinger equation;   Semiclassical stationary states.
  
  \medskip {\bf Mathematics Subject Classification:} 35J20 $\cdot$ 35J15 $\cdot$ 35J60
  }
  
  \end{minipage}
 \section{Introduction}
In this paper, we consider the following singularly perturbed nonlinear Schr\"odinger equation
\begin{equation}\label{eq1.1}
  -\e^2\Delta v + V(x) v =f(v),\ v>0,\ \lim_{|x|\to \infty} v(x)=0,
 \end{equation}
 which has been a subject of extensive research for several decades.
 Initiated by Floer and Weinstein in \cite{FW}, they demonstrated that when $f(u)=u^3$, Equation \eqref{eq1.1} admits a positive  concentrating peak solution   for small $\e$
  using the Lyapunov-Schmidt reduction technique. 
  Subsequently, Rabinowitz in \cite{Rabinowitz1992} employed global variational methods to investigate the existence of solutions to this problem more generally.
 Wang in \cite{WX1} further highlighted that such concentrated solutions must localize near critical points of the potential function 
 $V$. Building upon these seminal works, numerous substantial advancements have been made in establishing the existence of concentration solutions that cluster around different categories of critical points of 
 $V$. 
 Notable contributions include studies from \cite{ABC,Byeonjean,Byeonjeanjeantanaka,Byeon-Tanaka,KangWei,DelPino-Felmer1996,DelPino-Felmer1997,DelPino-Felmer2002,Li97} among others.
 
 If the limit equation to \eqref{eq1.1} has certain uniqueness and non-degeneracy properties, the Lyapunov-Schmidt reduction method is an effective method for such localized peak solutions, see for example \cite{WX1,KangWei,ABC}.  However, these favorable conditions are only known for a few special nonlinearities. 
 For more general cases where such uniqueness or non-degeneracy is not guaranteed, variational methods have been extensively developed to identify concentration solutions around critical points of $V$.
 See for example
 \cite{ Byeonjean,Byeonjeanjeantanaka,Byeon-Tanaka, DelPino-Felmer1996,DelPino-Felmer1997,DelPino-Felmer2002}.
 Finding solutions that concentrate near local maxima or saddle points of 
$V$ tends to be more challenging due to the lack of least-energy characteristics associated with these points.
In \cite{DelPino-Felmer2002}, del Pino and Felmer  
introduced a variational reduction technique specifically designed to construct solutions that localize at arbitrary local maxima or saddle points of  $V$.
Nevertheless, this methodology relies on a specially defined negative gradient flow over Nehari's manifold, which restricts its applicability and does not extend naturally to cover the broader Berestycki-Lions conditions  introduced in \cite{BerestyckiLions}. 

In \cite{Byeontanaka,Byeon-Tanaka}, for $f\in C^1$ satisfying   the Berestycki-Lions condition, Byeon and Tanaka managed to obtain  localized solutions that concentrate at general local maxima or saddle points of $V$,  by introducing an additional translational flow.
This idea was further integrated into a deformation argument   within an augmented   space, as Cingolani and Tanaka showed recently in \cite{CingolaniTanaka}.

The well-known Berestycki-Lions condition is noted as follows:
\begin{description}
  \item[(F1)] $f \in C(\mathbb{R},\R )$ and $f(0)=\lim\limits_{s \rightarrow 0^+} f(s) / s=0$.
\item[(F2)] If $N\geq 3$ then $\limsup\limits_{s \rightarrow +\infty} f(s) / s^{\frac{N+2}{N-2}}=0 $; if $N=2$, then  $\limsup\limits_{s \rightarrow +\infty} f(s) / e^{\alpha s^2}=0$
for any $\alpha>0$.
   \item[(F3)] There is $t_0>0$ such that 
     \[
      \frac{V_0}2t_0^2<F(t_0),
      \]
      where $F(s)=\int_0^s f(t) \rd t$ and $V_0$ is the constant appeared in (V2).
\end{description}
To state our result, we also need the following condition on $V$:
\begin{description}
	\item[(V1)] $V\in L_{loc}^\infty(\R^N)$ and $ \inf_{x\in\R^N} V(x) >0 $;
\item[(V2)] There is a bounded domain
  $\Omega\subset {\mathbb{R}}^{N}$  such that $V\in C^1(\overline\Omega)$ and $$V_0:=\max\limits_{x\in \overline\Omega}V(x)>\max\limits_{x\in \partial\Omega}V(x);$$
  \item[(V3)]  For any open neighborhood $\widetilde{O}$ of $\mathcal{V}:=\left\{x \in \Omega \mid V(x)=V_{0}\right\}$, there exists an open set $O \subset \widetilde{O}$ such that
  $$
  \mathcal{V} \subset O \subset \overline{O} \subset \widetilde{O} \cap \Omega \quad \text{and}\quad
  \inf _{x \in \partial O}|\nabla V(x)|>0,
  $$
\end{description}
 Without loss of generality, we assume $\inf_{x\in\R^N} V(x)=1<V_0$ and $0\in \mathcal V$.
We will show the following result.
\begin{theorem}\label{th1.1}
Suppose that   (F1)--(F3),  (V1)--(V3) hold. 
There exists $\varepsilon_0>0$ such that for each $\varepsilon\in(0,\varepsilon_0)$, 
equation \eqref{eq1.1} admits a positive solution $ v_\e $  satisfying 
\begin{enumerate}
  \item $v_\varepsilon>0$  has a global maximum point $x_\e$ satisfying $\displaystyle
\lim _{\varepsilon \to 0} \operatorname{dist}\left(x_{\varepsilon}, \mathcal{V}\right)=0.  
$
\item setting $u_{\varepsilon}(x)=v_{\varepsilon}( \varepsilon x),$ there exist a subsequence $\varepsilon_{j} \to 0$ such that
$$  
\left\|u_{\varepsilon_{j}}-U\left(\cdot-x_{\varepsilon_{j}}  / \varepsilon_{j}\right)\right\|_{\e} \to 0 \quad \text { as } j \to \infty, 
$$
where $U$ is a  positive ground state solution   to
$-\Delta U+ V_0 U =f(U_0)$.

\item 
there exist  $C,c>0$ such that
$$v_\varepsilon(x)\leq C e^{-c\varepsilon^{-1}|x-x_\e| }\quad \text{for}\ \ x\in\mathbb R^N.$$
\end{enumerate}

\end{theorem}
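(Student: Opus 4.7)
The plan is to locate the solution inside a tubular neighborhood of the family of translated ground states of the limit problem, extract it via a min--max scheme at the level $m(V_0)$, and rely on a quantitative lower bound $\|I_\e'(u)\| \gtrsim \e$ in an annular ``bad'' region to run a deformation down to a critical point. First I would rescale $u(x)=v(\e x)$ so that \eqref{eq1.1} becomes $-\Delta u + V(\e x)u = f(u)$ with action
\[
I_\e(u) = \tfrac12 \int_{\RN}|\nabla u|^2\,\rd x + \tfrac12 \int_{\RN} V(\e x) u^2\,\rd x - \int_{\RN} F(u)\,\rd x,
\]
and let $U$ be a positive radial ground state of the limit problem $-\Delta U + V_0 U = f(U)$ with energy $m(V_0)$, which exists under (F1)--(F3) by Berestycki--Lions. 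With the open set $O$ supplied by (V3), the relevant tube is
\[
X_\e(\rho) := \bigl\{\,U(\cdot - y/\e) + w : y \in \overline O,\ \|w\|_{H^1(\RN)} < \rho\,\bigr\}.
\]
As in del Pino--Felmer I would truncate $f$ outside $\Omega/\e$ by a penalization, so that critical points of the penalized functional lying in $X_\e(\rho)$ for small $\rho$ automatically solve \eqref{eq1.1}.

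Next I would set up the min--max. Test maps $\gamma : \overline O \to H^1(\RN)$ are built from cut-offs of $U(\cdot - \xi/\e)$ parametrized by $\xi \in \overline O$, with boundary values on $\partial O$ fixed by a reference map. The candidate value is
\[
c_\e = \inf_{\gamma \in \Gamma_\e} \max_{\xi \in \overline O} I_\e(\gamma(\xi)).
\]
A direct computation using the smoothness of $V$ on $\overline\Omega$ gives $c_\e \leq m(V_0) + o_\e(1)$, while a Brouwer degree argument based on a center-of-mass map $X_\e(\rho)\to \R^N$ together with $V(x)<V_0$ on $\partial O$ (from (V2)) yields $c_\e \geq m(V_0) - o_\e(1)$, so $c_\e \to m(V_0)$.

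The heart of the argument, and the step I expect to be the main obstacle, is the \emph{refined gradient estimate}: one needs $\rho_1 < \rho_2$ and $\sigma>0$ such that for all sufficiently small $\e$,
\[
u \in \overline{X_\e(\rho_2)} \setminus X_\e(\rho_1) \text{ and } I_\e(u) \leq m(V_0) + o_\e(1)\ \Longrightarrow\ \|I_\e'(u)\|_{H^{-1}(\RN)} \geq \sigma\e.
\]
The linear factor $\e$ is dictated by the ansatz: differentiating $U(\cdot - y/\e)$ in $y$ produces a translation direction along which $I_\e$ varies at rate $\e\,\nabla V(\e y/\e)\cdot \tfrac12 \int x\,U^2\,\rd x$, and (V3) forces this quantity to be bounded below in modulus when $y$ is near $\partial O$. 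The difficulty is transporting this lower bound from the pure ansatz to general $u = U(\cdot - y/\e)+w$ with $w$ of moderate but nonzero $H^1$-size, \emph{without} assuming non-degeneracy of $U$ and without an Ambrosetti--Rabinowitz condition. I would split an appropriate test direction for $I_\e'(u)$ into a translation piece and an orthogonal correction, control the cross terms using smallness of $w$ and the penalization outside $\Omega/\e$, and use the energy cap $I_\e(u)\leq m(V_0)+o_\e(1)$ to confine $w$ to the ``good'' spectral sector of the linearized operator around $U$. This is exactly where the new idea advertised in the abstract should enter.

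With the gradient estimate in hand, a pseudo-gradient deformation confined to $X_\e(\rho_2)$ can push the maximizers of any $\gamma\in \Gamma_\e$ into the inner tube $X_\e(\rho_1)$ without crossing the annulus, producing a critical point $u_\e$ at level $c_\e$. Writing $u_\e = U(\cdot - y_\e/\e) + w_\e$, standard concentration--compactness forces $\|w_\e\|_{H^1}\to 0$, and $c_\e \to m(V_0)$ combined with the strict inequality in (V2) forces $\e y_\e \to \mathcal V$; this is item (2). Item (1) follows because the positivity and the exponential decay of $U$ locate the global maximum of $v_\e$ within $o(\e)$ of $\e y_\e$, while (3) is a routine comparison-principle argument: using (V1) and the smallness of $u_\e$ outside a large ball, one builds a supersolution of the form $Ce^{-c|x-x_\e|}$ for the rescaled inequality and transports the decay back to $v_\e$.
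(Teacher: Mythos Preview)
Your overall architecture is right, but there are two genuine gaps.

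\textbf{The gradient estimate.} Your proposed mechanism --- split a test direction into a translation piece plus an orthogonal correction and control the latter via the ``good spectral sector of the linearized operator around $U$'' --- is not available here: under (F1)--(F3) the nonlinearity $f$ is merely continuous, so there is no linearized operator and no spectral decomposition to invoke. Moreover the translation test direction $\partial_\nu u$ is not even in $H^1$ unless $u\in H^2$, which you have no reason to expect. (Incidentally, your displayed variation formula $\e\,\nabla V\cdot \tfrac12\int xU^2$ vanishes for radial $U$; the correct leading term is $\tfrac{\e}{2}\,\partial_\nu V(z_0)\int U^2$.) The paper's device is different and worth knowing: assuming $\|\Gamma_\e'(u_\e)\|=o(\e)$ for contradiction, one introduces the auxiliary $w_\e$ solving $-\Delta w_\e+V(\e x)w_\e=f(u_\e)$. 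Testing $\Gamma_\e'(u_\e)$ against $u_\e-w_\e$ gives $\|u_\e-w_\e\|_\e=o(\e)$, while elliptic regularity (after a harmless $L^\infty$ truncation of $f$) puts $w_\e$ in $H^2_{\mathrm{loc}}$ with uniform bounds. One then multiplies the equation for $w_\e$ by $\partial_{x_1}(\psi_\e u_\e)$ and integrates by parts; a prior decay estimate (obtained by an iterated annulus cut-off argument from the smallness of $\|\Gamma_\e'(u_\e)\|$) kills all boundary and remainder terms, leaving $\partial_{x_1}V(z_0)\int U^2=0$, a contradiction. No smoothness of $f$ and no $H^2$ bound on $u_\e$ itself are needed.

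\textbf{The min--max.} Parametrizing only by $\xi\in\overline O$ is not enough under Berestycki--Lions assumptions. After deformation you need to exhibit a point $u_\e$ in the image with energy at least $E_{V_0}-o(1)$; with no Nehari structure, the center-of-mass constraint alone does not prevent the flow from lowering energy by dilation. The paper runs an $(N{+}1)$-dimensional min--max over $O^{\delta_0}\times[-1,1]$, the extra parameter being a dilation (and, for $N=2$, amplitude) of $U$. The linking map is $(p,s)\mapsto(\e\Upsilon(\gamma_\e(p,s)),\,P_{V_0}(\gamma_\e(p,s)))$, where $P_{V_0}$ is the Pohozaev functional; a degree computation produces $u_\e$ with $\e\Upsilon(u_\e)\in\mathcal V$ \emph{and} $P_{V_0}(u_\e)=0$, and it is the Pohozaev constraint that yields the lower energy bound $\Gamma_\e(u_\e)\ge E_{V_0}+o(1)$. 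Your scheme as written does not supply this constraint, so the lower bound $c_\e\ge m(V_0)-o_\e(1)$ is unjustified.
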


It is worth noting that a similar result was initially established in the work of Byeon and Tanaka \cite{Byeon-Tanaka}. In their study, they required an additional assumption that $f$ belongs to $C^1$  to employ a tail-minimizing operator.
The condition $f\in C^1$ 
  was subsequently relaxed by Cingolani and Tanaka in \cite{CingolaniTanaka} through the introduction of a new method for controlling the tail behavior of the functions involved.
  In the paper \cite{CingolaniTanaka}, while adopting arguments within an augmented function space, Cingolani and Tanaka introduced a technical requirement on 
 $\nabla V$. Although it is feasible to eliminate this condition with certain modifications to their arguments, our primary focus lies in introducing an alternative approach to tackle this issue effectively.

By setting $u(x)=v(\e x)$, we have 
\begin{equation}\label{eq1.2}
  - \Delta u + V(\e x) u =f(u),\ u>0,\ \lim_{|x|\to \infty} u(x)=0.
 \end{equation}
 We note that a solution to \eqref{eq1.2} can be obtained as a critical point of 
\[
  J_{\varepsilon}(u)=\frac{1}{2}\int_{\R^N}(|\nabla u|^2+  V(\varepsilon x)u^2)
  - \int_{\mathbb R^N}F(u), \quad u\in H^1(\R^N).
\]
In the process of identifying a localized critical point, employing a local deformation technique is particularly beneficial. This method relies on gradient estimates of the functional within an annular region surrounding a potential candidate critical point. 
However, when dealing with concentration solutions that are expected to cluster around a local maximum (or saddle) point of the potential function 
$V$, a uniform lower bound independent of $\e$ for the gradient in this annular domain does not generally exist.
Such a lack arises in situations where the barycenters of a sequence of functions diverge from the local maximum (or saddle) point of the potential.
Therefore, the crucial aspect of applying the deformation argument successfully under these conditions involves deriving a more precise lower estimate for the gradient when the barycenter of the function 
$u$ is at a certain distance away from the local maximum point. This refined estimate takes the form:
\begin{equation}\label{gradientestimate}
  \|\nabla J_\e(u)\|\geq  C\varepsilon,
\end{equation} 
where 
$C>0$ is a constant independent of the perturbation parameter 
$\e$.
By ensuring such a lower bound on the gradient norm, we can better control the behavior of the barycenters of functions along the gradient flow and hence facilitate the construction of concentrated solutions through a well-designed deformation argument.
If $u $ belongs to a  bounded  set of $H^2$, then this estimate can be obtained.
To elaborate, suppose that   $x_0$ is a limiting point of barycenters of  the sequence 
$\{u_\e\}$, which satisfies $\nu=\nabla V(x_0)\neq 0$.
Consider the directional derivative of the functional 
$J_\e$
  along the direction $\frac{\partial u_\e}{\partial \nu}$,
\begin{equation}\label{j'}
  \langle\nabla J_\varepsilon(u_\e), \frac{\partial u_\e}{\partial \nu}\rangle=\left.\frac{\rd}{\rd t}\right|_{t=0}J_{\varepsilon}(u_\e(\cdot-t \nu))= \varepsilon|\nu|^2 |u_\e|_2^2+o(\varepsilon).
\end{equation}
This will give a contradiction if $\|\nabla J_\varepsilon(u_\e)\|=o(\e)$.

We would like to emphasize that the gradient estimate presented in \eqref{gradientestimate} was initially derived by
del Pino and Felmer in 
   \cite{DelPino-Felmer2002}.
   They achieved this through a specialized negative gradient flow defined over Nehari's manifold. 
   By imposing some specific conditions on $f$, they successfully demonstrated the $H^2$ regularity along this flow.
Nonetheless, it is noteworthy that their method does not readily extend to encompass the broader Berestycki-Lions type conditions.
This highlights the need for alternative techniques to address more general scenarios under these conditions.

We aim to briefly outline our strategy for deriving \eqref{gradientestimate} under the special scenario where $V$ is bounded and $f$ is bounded in $C^1$.
 In this  case, $J_\varepsilon$ is indeed of class $C^2$. 
 Suppose that a sequence $\{u_\varepsilon\}$ satisfies $\|\nabla J_\e(u_\e)\|  =o(\varepsilon)$. We can then find a corresponding sequence
  $\{w_\e\}$ which belongs to a  bounded  set of $H^2$, such that
\begin{align*}
  \|u_\e-w_\e\|_\e=o(\e), \mbox{ and } \|\nabla J_\e(w_\e)\|=o(\e).
\end{align*}
Notably, if we define  $w_\e:= u_\e -\nabla J(u_\e)$, it follows that $\|u_\e-w_\e\|_\e =\|\nabla J_\e(u_\e)\|=o(\e)$. 
Moreover, due to the smoothness of $J$, we have \begin{align*}
\|\nabla J(w_\e)\|\leq \|\nabla J_\e(u_\e)-\nabla J_\e(w_\e)\|+\|\nabla J(u_\e)\|=\| J_\e''(u_\e)\|\|u_\e-w_\e\|+o(\e)=o(\e).
\end{align*}
Since $w_\e$ solves the equation $-\Delta w_\e +V(\e x) w_\e = f(u_\e)$, we can infer that $\{w_\e\}$  resides in a bounded set of    $H^2$.
Consequently, we can substitute 
$u_\e$
  with 
$w_\e$
  in the expression \eqref{j'} and derive a contradiction, thereby establishing the desired gradient estimate.

With a modification of this idea, we are able to rigorously prove this estimate under the more general assumptions (F1)-(F3) and (V1)-(V3)  within the context of this paper.
The proof involves introducing a prior decay estimate   when  gradient bounds are prescribed. This   estimation technique allows us to obtain the desired  gradient estimate under these broader assumptions. 
Upon comparing our method with those from \cite{Byeontanaka, Byeon-Tanaka} by Byeon and Tanaka, their approach involved using both a gradient flow and a translational flow in their deformation procedure. The translational flow was specifically tailored to reduce the energy of functions as their barycenters deviated from local maxima of the potential function. Meanwhile, Cingolani and Tanaka in \cite{CingolaniTanaka} further developed this idea by incorporating a deformation argument within an augmented functional space.
In contrast, our method builds upon the work of del Pino and Felmer \cite{DelPino-Felmer2002} and focuses solely on employing the gradient flow for conducting the deformation analysis. This streamlined strategy enables us to obtain the desired gradient estimate under more general assumptions (F1)-(F3) and (V1)-(V3).
Additionally, it is worth mentioning that similar ideas have been extended to prove the existence of multiple clustering peak solutions for the nonlinear Schr\"odinger equation with a prescribed 
$L^2$ norm constraint, as demonstrated in the work \cite{zhangzhang3}.

\section{Preliminaries}
\subsection{Limit problem}
Throughout this paper, we employ the notation $\|u\|_p$  to represent the 
$L^p(\R^N)$ norm of
 $u$.

For $m\in (0, V_0]$, consider
\[
L_m(u) = \frac12\|\nabla u\|_2^2 +\frac{m}2 \|u\|_2^2 -\int_{\R^N} F(u) : H^1(\R^N) \to \R.  
\]
  Critical points of $ L_{m} (u) $ correspond to weak solutions of the following nonlinear Schrödinger equation:
\begin{equation}\label{eq2.2}
  - \Delta u+mu=f(u),\quad u\in H^1(\R^N) .
\end{equation} 
 
We have from \cite{BerestyckiLions,Jeanjeantanaka02,Byeonjeanjeantanaka} that
\begin{lemma}
  For $m\in(0, V_0]$, the following statements hold.
\begin{enumerate}
  \item[(i)]  Every solution $u(x)$ to \eqref{eq2.2} satisfies the Pohozaev identity: $P_{m}(u)=0$, where
  $$
  P_{m}(u)=\frac{N-2}{2}|\nabla u|_{2}^{2}+\frac{Nm}{2} |u|_{2}^{2}-N \int_{\mathbb{R}^{N}} F(u) .
  $$
  \item[(ii)]
  The least energy level $E_{m}\triangleq  \inf \set{L_{m}(u) \mid u \in H^1(\R^N)\setminus\{0\}, L_{m}^{\prime}(u)=0}$  is attained by  
   a weak solution of \eqref{eq2.2}. Moreover, there holds
  $$
  E_{m}=\inf \set{L_m(u) \mid u \in H^1(\R^N)\setminus\{0\}, P_m(u)=0}=\inf_{\gamma \in \Gamma(m)} \max_{t \in[0,1]} L_{m}(\gamma(t)),
  $$
where $\Gamma(m)=\set{ \gamma(t) \in C\left([0,1]; H^{1}(\mathbb{R}^{N})\right) \mid \gamma(0)=0, L_m(\gamma(1))<0}$.
\item[(iii)]   $E_m$ is continuous and increasing with respect to $m$.
\end{enumerate}
\end{lemma}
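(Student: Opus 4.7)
For part (i), I would first upgrade the regularity of any weak solution $u$. Since (F1)--(F2) give subcritical growth of $f$, a standard bootstrap (Brezis-Kato together with elliptic regularity) yields $u\in H^2_{\rm loc}(\R^N)\cap C^{1,\alpha}_{\rm loc}(\R^N)$, and a sub-solution comparison via Kato's inequality produces exponential decay of both $u$ and $\nabla u$ at infinity. With this regularity the Pohozaev identity follows from the classical trick of multiplying \eqref{eq2.2} by $x\cdot\nabla u$, integrating over $B_R(0)$, using integration by parts, and letting $R\to\infty$; the surface integrals vanish by the decay estimates, leaving exactly $P_m(u)=0$.

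For part (ii) I would proceed in three stages. First, the attainment of $E_m$ by a critical point is the classical Berestycki-Lions theorem: minimize $\tfrac12\|\nabla u\|_2^2$ under the constraint $\int_{\R^N}\bigl(F(u)-\tfrac{m}{2}u^2\bigr)\,\rd x=1$, passing to Schwarz symmetrisations to gain radial symmetry and compactness via $H^1_{\rm rad}(\R^N)\hookrightarrow L^p(\R^N)$ for $p\in(2,2^\ast)$, and then dilating the minimiser to solve \eqref{eq2.2}. Second, for the Pohozaev-manifold characterization I would use the scaling $u_t(x):=u(x/t)$: a direct computation gives $L_m(u_t)=\tfrac{t^{N-2}}{2}\|\nabla u\|_2^2+\tfrac{t^N}{2}\bigl(m\|u\|_2^2-2\int F(u)\bigr)$, and (F3) together with $\int F(u)>\tfrac{m}{2}\|u\|_2^2$ on the relevant set forces this curve to have a unique maximum at the value $t^\ast$ where $P_m(u_{t^\ast})=0$. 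Combined with (i) this identifies $E_m$ with the Pohozaev infimum. Third, for the mountain pass characterization I would take the path $s\mapsto u(\cdot/(sT))$ (with $T$ large enough to drive $L_m$ negative) as an admissible element of $\Gamma(m)$; its maximum equals $L_m(u_{t^\ast})$, giving the ``$\le$'' direction, while the ``$\ge$'' direction rests on the Jeanjean-Tanaka observation that any path joining $0$ to a point of negative energy must cross $\{P_m=0\}$.

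For part (iii), monotonicity is again a scaling statement: given $m_1<m_2$ and a ground state $u_2$ at level $m_2$, one has $P_{m_2}(u_2)=0$ but $P_{m_1}(u_2)<0$, so rescaling $u_2$ to the unique point of $\{P_{m_1}=0\}$ produced by the second stage of (ii) strictly decreases $L_{m_1}$ below $L_{m_2}(u_2)$, hence $E_{m_1}<E_{m_2}$. Continuity of $m\mapsto E_m$ then follows from the two-sided scaling comparison $|E_{m_1}-E_{m_2}|\le C|m_1-m_2|$ together with the uniform bound on $\|u_m\|_2$ of ground states on compact $m$-intervals, which itself comes from the Pohozaev identity combined with the mountain pass structure. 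The main obstacle throughout is the third stage of part (ii): under only (F1)--(F3) the function $f(s)/s$ need not be monotone, so the usual Nehari-manifold projection is unavailable, and one is forced to base the mountain-pass identification entirely on the Jeanjean-Tanaka scaling argument.
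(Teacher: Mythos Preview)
The paper does not give a proof; it records the lemma as known and cites \cite{BerestyckiLions,Jeanjeantanaka02,Byeonjeanjeantanaka}. Your outline is a faithful reconstruction of those arguments in dimension $N\ge 3$ and would be accepted as such. The gap is the case $N=2$, which is precisely why the third reference is listed.

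Concretely, for $N=2$ the dilation path $s\mapsto u(\cdot/(sT))$ is not an admissible element of $\Gamma(m)$: since $\|\nabla u(\cdot/t)\|_2=\|\nabla u\|_2$ is scale-invariant in two dimensions, the path does not tend to $0$ in $H^1$ as $s\to 0^+$, and along it $L_m$ equals $\tfrac12\|\nabla u\|_2^2-(sT)^2\bigl(\int F(u)-\tfrac{m}{2}\|u\|_2^2\bigr)$, an affine function of $(sT)^2$ with no interior maximum. The same scale-invariance breaks your monotonicity argument in (iii): when $N=2$ one has $P_{m_1}\bigl(u_2(\cdot/t)\bigr)=t^2 P_{m_1}(u_2)<0$ for every $t>0$, so no dilation of $u_2$ can land on $\{P_{m_1}=0\}$. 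The fix, already visible in Lemma~\ref{lemma2.3}(ii) of the present paper, is to replace pure dilation by a combined amplitude--dilation family of the form $s\,U_0(e^{-\theta_0}\cdot)$; with that adjustment (and the corresponding modification of the comparison in (iii)) your plan goes through.
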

For $m\in(0, V_0]$, set
$$
K_{m} =\Set{u \in H^{1}\left(\mathbb{R}^{N}\right) \backslash\{0\} | L_{m}^{\prime}(u)=0, L_{m}(u) \leq E_{V_0}, u>0, u(0)=\max_{x\in \R^N}u(x) }
$$

Then we have from \cite{Byeontanaka,Byeonjeanjeantanaka}
\begin{lemma}\label{compact}
  For every   $\delta\in(0, V_0)$.
 The set $\cup_{m\in [V_0-\delta, V_0]}K_m $ is compact in $H^1(\mathbb R^N)$. Moreover, there are constants $C,c>0$ such that  for any $U\in \cup_{m\in [V_0-\delta, V_0]}K_m$,
  \[
    U(x)+|\nabla U(x)| \leq C\exp(-c|x|)\mbox{ for all } x\in \R^N.
  \]
\end{lemma}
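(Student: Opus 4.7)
The plan is to show that any sequence $(U_n)$ with $U_n \in K_{m_n}$, $m_n \in [V_0-\delta, V_0]$, has a subsequence converging in $H^1(\R^N)$ to some $U^* \in K_{m^*}$, with the uniform exponential decay extracted along the way. Passing to a subsequence I assume $m_n \to m^* \in [V_0-\delta, V_0]$.

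First I would set up a priori $H^1$ bounds. Since $P_{m_n}(U_n) = 0$ for any solution, the identity $L_{m_n}(u) - \frac{1}{N}P_{m_n}(u) = \frac{1}{N}\|\nabla u\|_2^2$ yields $\|\nabla U_n\|_2^2 = NL_{m_n}(U_n) \in [NE_{V_0-\delta}, NE_{V_0}]$, using the monotonicity of $m \mapsto E_m$ from item (iii) of the preceding lemma. Moser iteration applied to $-\Delta U_n + m_n U_n = f(U_n)$, exploiting the subcritical growth from (F1)--(F2) and the Sobolev bound on $\|U_n\|_{2^*}$, gives a uniform $L^\infty$ bound $\|U_n\|_\infty \leq M$. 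Testing the equation with $U_n$ and using (F1) in the form $f(s) \leq \eta s + C_\eta s^p$ for some $p \in (1, 2^*-1)$ forces $U_n(0) = \|U_n\|_\infty \geq c_0 > 0$ (otherwise the left-hand side of $\|\nabla U_n\|_2^2 + m_n\|U_n\|_2^2 = \int f(U_n)U_n$ would be strictly bigger than the right-hand side).

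Next I would establish uniform exponential decay. By (F1) select $\theta \in (0, c_0)$ with $f(s) \leq \frac{V_0-\delta}{2} s$ on $[0, \theta]$, a choice depending only on $V_0-\delta$. On $\{U_n \leq \theta\}$ the function $U_n$ is a subsolution of $-\Delta u + \frac{m_n}{2} u \leq 0$; provided the superlevel set $\{U_n > \theta\}$ is contained in a ball of uniform radius $R_0$, comparison with the barrier $Ce^{-c|x|}$ (with $c = \sqrt{(V_0-\delta)/4}$) yields $U_n(x) \leq Ce^{-c|x|}$ outside $B_{R_0}(0)$, and interior $C^1$ elliptic estimates then give $U_n(x) + |\nabla U_n(x)| \leq Ce^{-c|x|}$ on all of $\R^N$. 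Uniform decay delivers tightness and a uniform $L^2$ bound. Extracting $U_n \rightharpoonup U^*$ weakly in $H^1$, $U_n \to U^*$ in $C^1_{loc}$ by elliptic regularity, and in $L^q(\R^N)$ for $q \in [2, 2^*)$ by tightness, the limit $U^*$ solves $-\Delta U^* + m^* U^* = f(U^*)$ and satisfies $U^*(0) \geq c_0$, $U^* > 0$ (strong maximum principle), $U^*(0) = \max U^*$ (pointwise limits), and $L_{m^*}(U^*) \leq E_{V_0}$ (lower semi-continuity), so $U^* \in K_{m^*}$. Strong $H^1$ convergence follows from $\|\nabla U_n\|_2^2 = NL_{m_n}(U_n) \to NL_{m^*}(U^*) = \|\nabla U^*\|_2^2$ combined with the $L^2$ convergence from tightness.

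The main obstacle is the uniform-radius containment $\{U_n > \theta\} \subset B_{R_0}(0)$; without it the barrier argument produces decay constants that degenerate in $n$. Heuristically, if there were points $y_n$ with $|y_n| \to \infty$ and $U_n(y_n) \geq \theta$, translating $U_n(\cdot + y_n)$ and passing to a limit would produce an additional nontrivial solution $W$ of the limit equation with $W(0) \geq \theta$; combined with $U^*$, the two well-separated profiles would contribute additive mass and energy, and for the ceiling $L_{m_n}(U_n) \leq E_{V_0}$ to be respected a refined splitting / concentration-compactness analysis is required. Making this rigorous under the broad Berestycki--Lions framework (no monotonicity of $f(s)/s$, no Lipschitz on $f$) is the delicate step, and is where the analysis of Byeon--Tanaka is invoked.
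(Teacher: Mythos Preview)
The paper does not supply its own proof of this lemma: it is stated immediately after the sentence ``Then we have from \cite{Byeontanaka,Byeonjeanjeantanaka}'' and is simply quoted from those references. There is therefore no in-paper argument to compare your proposal against; your sketch is already more detailed than what the paper offers.

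As a sketch of the underlying proof, your outline is broadly correct and tracks the standard route in the cited works. Two small points are worth tightening. First, your argument for the lower bound $U_n(0)\ge c_0$ via testing the equation against $U_n$ presupposes $\|U_n\|_2<\infty$, which you only obtain later from the decay; the cleaner and standard argument is pointwise: at the maximum $x=0$ one has $-\Delta U_n(0)\ge 0$, hence $m_n U_n(0)\le f(U_n(0))$, and (F1) then forces $U_n(0)\ge c_0$ with $c_0$ depending only on $V_0-\delta$. Second, your last step ``$\|\nabla U_n\|_2^2 = N L_{m_n}(U_n)\to N L_{m^*}(U^*)$'' is circular as written, since convergence of the energies is what you are trying to establish. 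The clean route is: for $N\ge 3$ use $P_{m_n}(U_n)=0$ to write $\tfrac{N-2}{2}\|\nabla U_n\|_2^2 = N\!\int F(U_n) - \tfrac{N m_n}{2}\|U_n\|_2^2$, whose right-hand side converges by tightness; for $N=2$ test the equation against $U_n$ and use tightness plus the uniform $L^\infty$ bound to pass to the limit in $\int f(U_n)U_n$. Either way you get $\|\nabla U_n\|_2\to\|\nabla U^*\|_2$, and together with $\|U_n\|_2\to\|U^*\|_2$ this yields strong $H^1$ convergence.

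You have correctly isolated the genuine difficulty: the uniform containment $\{U_n>\theta\}\subset B_{R_0}(0)$, which under mere Berestycki--Lions hypotheses requires a concentration-compactness/splitting argument showing that a second profile would push the energy above $E_{V_0}$. That is precisely the content imported from \cite{Byeontanaka,Byeonjeanjeantanaka}, so your attribution is accurate.
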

 
Take $U_0\in K_{V_0}$, we know from \cite{Jeanjeantanaka02} that 
\begin{lemma} \label{lemma2.3}
  \begin{enumerate}
    \item[(i)] When $N\ge3$, 
    \[P_{V_0}(U_0(e^{-\theta}\cdot))=\frac{\rd}{\rd\theta}L_{V_0}(U_0(e^{-\theta}\cdot))>0 \quad \mbox{if}\  \theta<0,\] 
    \[P_{V_0}(U_0(e^{-\theta}\cdot))=\frac{\rd}{\rd\theta}L_{V_0}(U_0(e^{-\theta}\cdot))<0 \quad \mbox{if}\  \theta>0.\] 
    \item[(ii)] When $N=2$,  \[ L_{V_0}(U_0(e^{-\theta}\cdot))\equiv E_{V_0}\quad \mbox{for}\ \theta\in\R.\]   Moreover, there is 
    $\theta_0>0$ and $s_0>1$ such that
    \[P_{V_0}(sU_0(e^{-\theta_0}\cdot))>0,\quad \frac{\rd}{\rd s}L_{V_0}(sU_0(e^{-\theta_0}\cdot))>0\quad \mbox{for}\ s\in [0,1],\]
    \[P_{V_0}(sU_0(e^{-\theta_0}\cdot))<0,\quad \frac{\rd}{\rd s}L_{V_0}(sU_0(e^{-\theta_0}\cdot))<0\quad \mbox{for}\ s\in [1, s_0].\]
  \end{enumerate}
  
\end{lemma}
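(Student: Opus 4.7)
My plan is to derive everything from the explicit scaling identities for the two-parameter family $v_{s,\theta}(x):=sU_0(e^{-\theta}x)$. A routine change of variables gives
\[
L_{V_0}(v_{s,\theta}) = \tfrac{s^{2}}{2}e^{(N-2)\theta}\|\nabla U_0\|_{2}^{2} + \tfrac{V_0 s^{2}}{2}e^{N\theta}\|U_0\|_{2}^{2} - e^{N\theta}\!\int_{\mathbb R^N}\!F(sU_0),
\]
and an analogous polynomial formula in $e^{\theta}$ for $P_{V_0}(v_{s,\theta})$. Throughout the argument I will rely on the two structural identities satisfied by $U_0$: the weak equation $\|\nabla U_0\|_{2}^{2} + V_0\|U_0\|_{2}^{2} = \int f(U_0)U_0$ and the Pohozaev identity $P_{V_0}(U_0)=0$.

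For part (i), I set $s=1$ and differentiate in $\theta$; the formula above produces $\frac{\rd}{\rd\theta}L_{V_0}(v_{1,\theta}) = P_{V_0}(v_{1,\theta})$ by direct inspection. Using $P_{V_0}(U_0)=0$ to eliminate $N\!\int F(U_0)$ in favour of the two $L^{2}$-quantities, the expression collapses to
\[
P_{V_0}(v_{1,\theta}) = \tfrac{N-2}{2}\|\nabla U_0\|_{2}^{2}\, e^{(N-2)\theta}\bigl(1 - e^{2\theta}\bigr),
\]
whose sign, for $N\geq 3$, is governed entirely by $1-e^{2\theta}$; this delivers both sign statements at once.

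For part (ii), the first identity is immediate: with $N=2$ the coefficient of $\|\nabla U_0\|_{2}^{2}$ in $L_{V_0}(v_{1,\theta})$ is $\theta$-independent, and the remaining $\theta$-dependent piece $e^{2\theta}\bigl[\tfrac{V_0}{2}\|U_0\|_{2}^{2} - \int F(U_0)\bigr]$ is killed by the $N=2$ Pohozaev identity, leaving $L_{V_0}(v_{1,\theta})\equiv E_{V_0}$. For the ``moreover'' claim I analyse the path $s\mapsto v_{s,\theta_0}$. One checks $P_{V_0}(v_{s,\theta_0}) = e^{2\theta_0}g(s)$ with $g(s):=V_0 s^{2}\|U_0\|_{2}^{2} - 2\!\int F(sU_0)$, so the sign of $P_{V_0}$ is independent of $\theta_0$. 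A direct computation yields $g(1)=0$, $g'(1)=-2\|\nabla U_0\|_{2}^{2}<0$, and $g(s)>0$ for small $s>0$ via (F1). To promote these local facts into the global sign on $(0,1)$ I invoke the infimum characterization $E_{V_0}=\inf\{L_{V_0}(u):u\ne 0,\ P_{V_0}(u)=0\}$ from the preceding lemma: any hypothetical intermediate zero $s_{\ast}\in(0,1)$ of $g$ would place $s_{\ast}U_0$ on the Pohozaev manifold with $L_{V_0}(s_{\ast}U_0)=s_{\ast}^{2}E_{V_0}<E_{V_0}$, impossible. The existence of $s_0>1$ with $g<0$ on $(1,s_0]$ follows from $g'(1)<0$ and continuity. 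The sign of $\partial_s L_{V_0}(v_{s,\theta_0}) = s\|\nabla U_0\|_{2}^{2} + e^{2\theta_0}\bigl[V_0 s\|U_0\|_{2}^{2} - \int f(sU_0)U_0\bigr]$ is then extracted by coupling the known sign of $g$ with a careful choice of small $\theta_0>0$ and continuity around $s=1$.

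The main technical obstacle lies in this last step: under only (F1)--(F3) one has no pointwise monotonicity of $f(t)/t$, so the sign of $\partial_s L_{V_0}(v_{s,\theta_0})$ cannot be read off in an elementary way. The resolution is that the infimum characterization of $E_{V_0}$ promotes the local derivative information at $s=1$ (where several relevant quantities simultaneously vanish when $\theta_0=0$) into the required global pointwise control on $[0,1]$ and $[1,s_0]$ after a suitable small perturbation of $\theta_0$; this is the delicate bookkeeping I expect to occupy most of the proof.
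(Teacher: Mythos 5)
Your identities for $L_{V_0}(v_{s,\theta})$ and $P_{V_0}(v_{s,\theta})$ are correct, and part (i), the $\theta$-invariance of $L_{V_0}$ when $N=2$, and the sign analysis of $g(s):=V_0 s^2\|U_0\|_2^2-2\int F(sU_0)$ via the least-energy characterization are all sound. In fact the paper itself does not prove this lemma at all (it cites Jeanjean--Tanaka), so an honest self-contained argument would be a genuine contribution. However, your treatment of the $\partial_s L_{V_0}$ sign in part (ii) is not merely ``delicate bookkeeping'' left for later --- it is a genuine gap, and in one respect the statement you are trying to reach is unattainable as written.

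Concretely, write $\partial_s L_{V_0}(v_{s,\theta_0}) = s\|\nabla U_0\|_2^2 + e^{2\theta_0}\tfrac{1}{2}g'(s)$. Using $\int f(U_0)U_0 = \|\nabla U_0\|_2^2 + V_0\|U_0\|_2^2$ one gets $g'(1) = -2\|\nabla U_0\|_2^2$, hence
\[
\partial_s L_{V_0}(v_{s,\theta_0})\big|_{s=1} = \bigl(1-e^{2\theta_0}\bigr)\|\nabla U_0\|_2^2 < 0
\quad\text{for every }\theta_0>0.
\]
This is strictly negative, so the asserted inequality $\partial_s L_{V_0}>0$ on the closed interval $[0,1]$ cannot hold for any $\theta_0>0$; the same computation shows $\theta_0<0$ would break the companion inequality on $[1,s_0]$. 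A ``careful choice of small $\theta_0>0$'' cannot repair a single scalar quantity that has the wrong sign. You should have noticed this obstruction directly from your own formula. The correct reading must be with open intervals or non-strict inequalities (or, more faithfully to \cite{Jeanjeantanaka02}, with a different normalisation of the path), and your proof attempt neither flags nor resolves the discrepancy.

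Beyond the endpoint, the proposed mechanism for the interior is unconvincing. Knowing $g>0$ on $(0,1)$ gives no pointwise control on $g'$, so the sign of $s\|\nabla U_0\|_2^2 + e^{2\theta_0}g'(s)/2$ is not determined: $g$ could dip and rise inside $(0,1)$ while remaining positive, making $g'$ large and negative at interior points. Your suggestion that the infimum characterization of $E_{V_0}$ ``promotes local derivative information at $s=1$ into global pointwise control'' is not an argument --- the Pohozaev-manifold constraint controls the value of $L_{V_0}$ where $P_{V_0}=0$, not the derivative of $L_{V_0}$ along the scaling ray at points where $P_{V_0}\ne 0$. You would need an additional structural input (e.g.\ the precise version of the path construction in Byeon--Jeanjean--Tanaka, where the prefactor and the dilation are varied jointly rather than a pure $s$-scaling at fixed $\theta_0$) to obtain the monotonicity you want. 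As written, parts (i), the $\theta$-invariance, and the sign of $P_{V_0}$ are proved; the derivative claim in (ii) is not.
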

 
By (V1)-(V3), take a   neighborhood  $O$   of $\mathcal V$  and $\delta_0>0$ sufficiently small such that 
\begin{gather}
0\in \mathcal V\subset O\subset  O^{5\delta_0}\subset \Omega, \inf_{x\in O^{3\delta_0}\setminus O}|\nabla V(x)|>0,\label{equ2.2}\\
\inf_{x\in O^{5\delta_0}}V(x)\geq V_0-\delta_0>1,\quad 
\mbox{and}\quad 2E_{V_0-\delta_0}>E_{V_0},\label{equ2.3}
\end{gather}
where  $O^\delta=\set{x\in \mathbb R^N | \text{dist}(x, O)\leq \delta}$.
We define 
\begin{equation}
 S_0 = \begin{cases}
  \displaystyle  \Set{u(e^{-\theta}\cdot) | u\in K_m, m\in [V_0-\delta_0, V_0],  \theta\in [-\theta_0, \theta_0]}, &N=2,\\
  \displaystyle  \bigcup_{m\in [V_0-\delta_0, V_0]} K_m, & N\geq 3.
  \end{cases}
\end{equation}
Then $S_0$ is compact in $H^1(\R^N)$.

\subsection{Barycenter function}

For given $\varepsilon>0$, we define the space
\begin{equation*}
{H}_{\varepsilon}=\Set{u\in H^1(\mathbb{R}^N) | \int_{\mathbb{R}^N}V(\varepsilon x)u^2\rd x<+\infty}
\end{equation*}
equipped with the norm
\begin{equation*}
\|u\|_{\varepsilon}  := (\|\nabla u\|_2^2+\int_{\mathbb{R}^N}V(\varepsilon x)u^2\rd x)^{\frac{1}{2}}.
\end{equation*}
By the compactness of $S_0$,
 we can find  $R_0>1$ such that for 
each 
$U\in S_0$, there holds
\begin{equation}\label{equa 3.10}
\|U\|_{L^2(B(0,R_0/2))}>\frac{3}{4}\rho_1,\quad  \|U\|_{L^2(\mathbb R^N\setminus B(0,R_0))}<\frac{\rho_1}{8}.
\end{equation}
Take $\phi\in C_0^\infty (O^{4\delta_0};[0,1])$ such that 
\begin{gather*}
   \phi(x) =1 \mbox{ for } |x|\leq \delta_0/2, \quad 
    \phi(x) =0\mbox{ for }|x|\geq \delta_0,\quad 
    |\nabla \phi| \leq 4/\delta_0 \mbox{ in } \R^N.
  \end{gather*}
  For $\e>0$, set $\phi_\e(x)= \phi(\e x)$.
We   set 
\begin{equation}
   S_\e(\Omega) =\Set{   (\phi_\e U) (\cdot - y ) | \ \e y\in \Omega,\  U \in S }.
\end{equation}
 Define
\begin{equation}
  Z_\e =\Set{u\in H_\e | \dist_{H_\e}(u, S_\e(\Omega))< \frac{\rho_1}{16}  }.
 \end{equation}

We will recall the barycenter function  in \cite{zhangzhang2,CingolaniTanaka}, which is a smooth counterpart of that in \cite{Byeontanaka,Byeon-Tanaka}. 
First note that there is $\e_1>0$ such that for $\e\in(0,\e_1)$,
   $u\in Z_\e$,
   there hold
  \[\int_{B(P, R_0)}   u^2 \geq \frac1 2 \rho_1^2\quad  \mbox{for}\quad  P\in    B(y ,R_0/2), 
  \quad \int_{B(P, R_0)}   u^2\leq \frac{1}{16}\rho_1^2 \quad  \mbox{for}\quad  P\notin
    B(y,2R_0).  
  \]
For $u\in H^1(\R^N) $ and $P\in\mathbb R^N$, we define
\begin{equation}
d(u,P)=\psi\left( \int_{B(P, R_0)}   u^2\right),
\end{equation}
with $\psi\in C_0^\infty([0,\infty), [0,1])$ satisfying
\begin{equation*}
\psi(r)=
\begin{cases}
0 \  \ &r\in[0,\frac{1}{16}\rho_1^2],\\
1 \ \ &r\in[\frac{1}{2}\rho_1^2,\infty).
\end{cases}
\end{equation*}
 
We define
\begin{equation}
\Upsilon (u)=\frac{\int_{\R^N}d(u,P)PdP}{\int_{\R^N}d(u,P)dP}\in \R^N.
\end{equation}
 
 Similarly to \cite[Lemma 2.5]{zhangzhang2}, we have 
\begin{lemma}\label{lem3.4}
The following statements hold for $\e\in(0, \e_1)$.
\begin{itemize}
  \item[(i)] If $\|u- (\phi_\e U) (\cdot - y )\|_\e<  {\rho_1}/{16}$ for $y\in \frac1\e \Omega,  U\in S $, we have
  $|\Upsilon (u)-y |\leq 2R_0$,   where use notation $ \frac1\varepsilon \Omega=\set{x\in\R^N|\e x\in \Omega}$.
\item[(ii)] $\Upsilon (u)$ is $C^1$ continuous for each $u\in  Z_\e$. Moreover, there exists a constant $D_1>0$   such that
    $$\sup_{u\in  Z}\|\Upsilon '(u)\|_{\mathcal L(H_\e,\R^N)}\leq D_1.$$
\item[(iii)] If $u,v\in    Z_\e$ satisfy for some  $h\in\mathbb R^N$ that
$$v(x-h)=u(x)\ \ \ \text{in}\ B(\Upsilon (u),4R_0),$$
then $\Upsilon (v)=\Upsilon (u)-h$.
\item[(iv)] $\Upsilon'(u)v=0$ if $\supp v\subset \R^N \setminus B(\Upsilon(u), 4R_0)$.
\end{itemize}
\end{lemma}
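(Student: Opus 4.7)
The plan is to verify (i)--(iv) in order, adapting the arguments of \cite{zhangzhang2}.

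For (i), I would first extract precise support information for $P\mapsto d(u,P)$. From \eqref{equa 3.10}, the compactness of $S_0$, and the closeness hypothesis $\|u-(\phi_\e U)(\cdot-y)\|_\e<\rho_1/16$, a short $L^2$ computation (valid for $\e\in(0,\e_1)$) delivers $\int_{B(P,R_0)}u^2\geq\rho_1^2/2$ for $P\in B(y,R_0/2)$ and $\int_{B(P,R_0)}u^2\leq\rho_1^2/16$ for $P\notin B(y,2R_0)$. By the definition of $\psi$, this forces $d(u,\cdot)=1$ on $B(y,R_0/2)$ and $d(u,\cdot)=0$ off $\overline{B(y,2R_0)}$, so that (i) follows because $\Upsilon(u)$ is a convex combination of points in $\overline{B(y,2R_0)}$.

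For (ii), smoothness of $\psi$ and the Hilbert structure make $u\mapsto d(u,P)$ a $C^1$ map whose derivative is bounded uniformly in $P$ over the compact set $\overline{B(y,2R_0)}$ identified in (i). The quotient rule, together with the uniform positive lower bound $D(u):=\int d(u,P)\,dP\geq\mathrm{vol}(B(y,R_0/2))$, yields $\Upsilon\in C^1(Z_\e,\R^N)$ and the operator-norm bound $D_1$ depending on $R_0$, $|\psi'|_\infty$, and $\rho_1$.

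For (iii), the hypothesis $v(x-h)=u(x)$ on $B(\Upsilon(u),4R_0)$, combined with the support analysis of (i) applied to both $u$ and $v$, gives $d(v,P)=d(u,P+h)$ for every $P\in\R^N$: where $B(P,R_0)$ lies inside the coincidence region this is immediate, and otherwise both sides vanish. A translation $Q=P+h$ in the integrals defining $\Upsilon(v)$ then yields $\Upsilon(v)=\Upsilon(u)-h$.

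For (iv), the chain rule produces
\[
\Upsilon'(u)v=\frac{1}{D(u)}\int(P-\Upsilon(u))\,\psi'\!\left(\int_{B(P,R_0)}u^2\right)\cdot 2\int_{B(P,R_0)}uv\,dP.
\]
The plan is to show the integrand vanishes pointwise. The factor $\int_{B(P,R_0)}uv$ vanishes unless $B(P,R_0)\cap\supp v\neq\emptyset$, which by the hypothesis $\supp v\subset\R^N\setminus B(\Upsilon(u),4R_0)$ pushes $P$ outside $B(\Upsilon(u),3R_0)$. Meanwhile, the support analysis of (i) confines the nonvanishing set of $\psi'(\int u^2)$ to the transition annulus inside $\overline{B(y,2R_0)}$; combined with $|\Upsilon(u)-y|\leq 2R_0$ and, if needed, the exponential decay of $U\in S_0$ from Lemma~\ref{compact} to sharpen the support estimate for $d(u,\cdot)$, one verifies the two support conditions are incompatible, giving $\Upsilon'(u)v=0$. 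This final bookkeeping of radii is the main technical obstacle, and the rest of the argument is essentially bookkeeping around the cutoff $\psi$.
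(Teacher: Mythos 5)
The paper itself gives no proof here, only a citation to \cite{zhangzhang2}, so what can be assessed is whether your reconstruction is sound. Your overall strategy (establish that $d(u,\cdot)$ equals $1$ on a small ball about $y$ and vanishes outside a slightly larger ball, deduce (i) by a convex-combination argument, then run the quotient/chain rule for (ii) and (iv), and reduce (iii) to (iv)-type support matching) is the natural one and is essentially right; your derivative formula for $\Upsilon'(u)v$ is correct, and the uniform lower bound on $D(u)$ does control (ii). However, one point you treat as optional is in fact essential, and it affects (iii) as well as (iv). The preamble's estimates only give that $d(u,\cdot)$ is supported in $\overline{B(y,2R_0)}$ and hence $|\Upsilon(u)-y|\leq 2R_0$; with only these bounds, a transition-zone point $P$ (where $\psi'\ne 0$) can lie at distance up to $|P-y|+|y-\Upsilon(u)|\leq 4R_0$ from $\Upsilon(u)$, so the conditions ``$P$ in the transition zone'' and ``$|P-\Upsilon(u)|>3R_0$'' are \emph{not} automatically incompatible, and the integrand in your formula for $\Upsilon'(u)v$ need not vanish pointwise. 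The same arithmetic breaks the phrase ``otherwise both sides vanish'' in your treatment of (iii): with only the $2R_0$ support bound you cannot conclude that $d(u,Q)=d(v,Q-h)=0$ for $Q\notin B(\Upsilon(u),3R_0)$. The fix you mention parenthetically---using the uniform exponential decay of $U\in S_0$ from Lemma~\ref{compact} to enlarge $R_0$ so that $\|U\|_{L^2(\R^N\setminus B(0,\alpha R_0))}<\rho_1/8$ for some fixed $\alpha<1$---is not cosmetic: it shrinks the support of $d(u,\cdot)$ to $B(y,(1+\alpha)R_0)$, makes the transition zone a thin annulus near radius $R_0$, and (because $d\equiv 1$ on a comparably sized inner ball whose contribution to the centroid cancels by symmetry) yields $|\Upsilon(u)-y|=O(\alpha R_0)$ rather than merely $\leq 2R_0$. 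With $\alpha$ chosen small enough, the triangle inequalities in both (iii) and (iv) do close, and for (iii) one also needs a companion estimate $|y_v+h-y_u|=O(\alpha R_0)$, obtained by comparing the mass of $u$ and of $v(\cdot-h)$ on $B(y_u,\alpha R_0)$ inside the coincidence region. So the architecture of your proposal is correct, but you should promote the decay-sharpening from an afterthought to the central step, apply it to (iii) in addition to (iv), and carry out the centroid estimate $|\Upsilon(u)-y|=O(\alpha R_0)$ explicitly rather than relying on the weaker $\leq 2R_0$ from part (i).
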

 \subsection{Penalized functional}

 For
 $\rho\leq \frac{1}{16}\rho_1$, $\delta\in[\delta_0, 3\delta_0]$, set 
  \begin{equation}
  Z_\e(\rho,\delta)= \Set{u\in H_\varepsilon|\dist_{H_\varepsilon}(u, S_\e(\Omega))<  {\rho } ,\  \dist(\varepsilon\Upsilon(u), O)<\delta     }.
  \end{equation}
  
 \begin{remark}
  Let $\rho<\rho'$ and $\delta<\delta'$. Then,  for $\varepsilon$ sufficiently small,
   \[
   \dist_{H_\varepsilon}(\partial Z(\rho',\delta'), Z(\rho,\delta)) \ge\min\{\rho'-\rho, {\rho_1} \}.  
   \]
   In fact, if $\dist_{H_\varepsilon}(u, S_\e(\Omega))=  \rho' $, then
    \[\dist_{H_\varepsilon}(u, Z_\e(\rho,\delta)) \ge\rho'-\rho.\] 
   If $\dist_{H_\varepsilon}(u, S_\e(\Omega)) < \rho'$ and $ \dist(\e\Upsilon(u), O)=\delta'$, then by \eqref{equa 3.10} and Lemma \ref{lem3.4},
   \[\liminf_{\e\to 0}\dist_{H_\varepsilon}(u, Z_\e(\rho,\delta))\ge 
   2\inf_{U\in S_0} \|U\|_{L^2(B(0, R_0/2))} - \frac{\rho_1}{16}-\frac{\rho_1}8 >  \rho_1 .\]
 \end{remark}

Note  that there is $D_2>0$ independent of $\e$ such that 
 \begin{equation}
 \|u\|\leq  \|u\|_\e\leq D_2\quad \mbox{for any } u\in Z_\e(\rho_1,3\delta_0).
 \end{equation}
 The following result follows directly from the Sobolev inequality when $N\geq 3$ 
 and \cite[Lemma 1]{Byeonjeanjeantanaka} when $N=2$.
\begin{lemma}\label{lemma26}
  Let $f$ satisfies (F1)-(F3). Assume 
  $w\in H^1(\R^N)$   weakly satisfies $\|w\|\leq D_2$ and
  \[
   \int_{\R^N} |\nabla w |^2  +  w^2  \leq   \int_{\R^N} |f(w) w|,
  \]
  then either  there is $\rho_2>0$ independent of $w$ such that $|w|_2\geq \rho_2$ or $w=0$.
\end{lemma}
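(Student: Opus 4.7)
The plan is to combine the subcriticality of $f$ encoded in (F1)--(F2) with the a priori $H^1$-bound $\|w\|\le D_2$ via a Gagliardo--Nirenberg interpolation. The underlying idea is that (F1) forces $f(s)s$ to be superquadratic at the origin and (F2) forces it to be subcritical at infinity, so the assumed inequality $\|\nabla w\|_2^2+\|w\|_2^2\le\int_{\R^N}|f(w)w|$ cannot hold for $w$ with arbitrarily small $L^2$-norm unless $w$ is trivial.

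For $N\ge 3$ I would fix an exponent $p$ in $[2+4/N,\,2N/(N-2))$; this interval is nonempty since $(2N+4)/N<2N/(N-2)$. By (F1)--(F2), for every $\eta>0$ there exists $C_\eta>0$ with $|f(s)s|\le \eta s^2+C_\eta|s|^p$ for all $s\in\R$. Substituting this into the hypothesis gives
\[
\|\nabla w\|_2^2+(1-\eta)\|w\|_2^2\le C_\eta\|w\|_p^p.
\]
Gagliardo--Nirenberg yields $\|w\|_p^p\le C\|w\|_2^{p(1-\alpha)}\|\nabla w\|_2^{p\alpha}$ with $\alpha=N(\tfrac12-\tfrac1p)$, and the choice of $p$ guarantees both $p\alpha\ge 2$ and $p(1-\alpha)>0$. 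Since $\|\nabla w\|_2\le \|w\|\le D_2$, the bound $\|\nabla w\|_2^{p\alpha}\le D_2^{p\alpha-2}\|\nabla w\|_2^2$ then gives
\[
\|\nabla w\|_2^2+(1-\eta)\|w\|_2^2\le C'\|w\|_2^{p(1-\alpha)}\|\nabla w\|_2^2
\]
for a constant $C'$ depending only on $f$, $p$ and $D_2$. Taking $\eta=1/2$ and choosing $\rho_2>0$ so small that $C'\rho_2^{p(1-\alpha)}\le 1/2$, the condition $\|w\|_2<\rho_2$ forces $\tfrac12\|\nabla w\|_2^2+\tfrac12\|w\|_2^2\le 0$, whence $w\equiv 0$. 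Contrapositively, any nontrivial $w$ must satisfy $\|w\|_2\ge\rho_2$, and $\rho_2$ is manifestly independent of $w$.

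The case $N=2$ is indicated in the excerpt to follow from \cite[Lemma 1]{Byeonjeanjeantanaka}, which provides an estimate of the form $\int_{\R^N}|f(w)w|\le \eta\|w\|_2^2+C_\eta\|w\|_p^p$ for any fixed $p>2$ on the bounded set $\{\|w\|\le D_2\}$; after that the argument of the preceding paragraph proceeds verbatim using the two-dimensional Gagliardo--Nirenberg inequality, for which one can even take $p\alpha$ as large as one pleases. There is no real obstacle in the proof; the only care required is the exponent bookkeeping, namely verifying that a single $p$ can be chosen so that simultaneously $p(1-\alpha)>0$ (needed to absorb the $\|w\|_2$ factor into a small constant) and $p\alpha\ge 2$ (needed to absorb the $\|\nabla w\|_2$ factor into the left-hand side via the a priori bound $D_2$).
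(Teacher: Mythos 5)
Your strategy---a subcritical growth bound for $f(s)s$ fed into Gagliardo--Nirenberg interpolation, with the a priori $H^1$ bound used to absorb the surplus gradient power---is the right one and matches the paper's cited Sobolev route. The gap is in the growth estimate itself. You assert that for a fixed $p\in[2+\tfrac4N,\tfrac{2N}{N-2})$ and every $\eta>0$ there is $C_\eta$ with $|f(s)s|\le\eta s^2+C_\eta|s|^p$, ``by (F1)--(F2).'' This inference is false. Condition (F2) says only $\limsup_{s\to\infty}f(s)/s^{(N+2)/(N-2)}=0$ and gives no fixed strictly subcritical polynomial bound: take $f(s)=|s|^{4/(N-2)}s/\log(e+|s|)$, which satisfies (F1) and (F2), yet $f(s)s=|s|^{2N/(N-2)}/\log(e+|s|)$ eventually exceeds $\eta s^2+C|s|^p$ for every $p<\tfrac{2N}{N-2}$, however $C$ is chosen.

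What (F1)--(F2) do give, for any $\eta>0$ and any fixed $p\in(2,\tfrac{2N}{N-2})$, is the three-term bound
\[
|f(s)s|\ \le\ \eta s^2+\eta|s|^{2N/(N-2)}+C_{\eta,p}|s|^p,
\]
obtained from (F1) near $0$, from (F2) with small leading coefficient near $\infty$, and from continuity of $f$ on the remaining compact range, dominated there by a multiple of $|s|^p$. The extra critical term must then be absorbed before interpolating: by Sobolev and the a priori bound $\|w\|\le D_2$,
\[
\eta\,\|w\|_{2N/(N-2)}^{2N/(N-2)}\ \le\ \eta\,C_S\,\|\nabla w\|_2^{2N/(N-2)}\ \le\ \eta\,C_S\,D_2^{4/(N-2)}\,\|\nabla w\|_2^2\ \le\ \tfrac12\|\nabla w\|_2^2
\]
once $\eta$ is fixed small in terms of $D_2$ and $N$. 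After this absorption your Gagliardo--Nirenberg step with the surviving $C_{\eta,p}|s|^p$ term runs exactly as you wrote it and yields $w=0$ whenever $\|w\|_2$ is below a fixed threshold, which is presumably what the paper means by ``follows directly from the Sobolev inequality.'' (Alternatively, the paper later truncates $f$ to a bounded $\tilde f$, see \eqref{eq2.11}, and for bounded $f$ your two-term estimate does hold; but the lemma as stated assumes only (F1)--(F3), so as written the one-sentence justification of the growth bound is a real, though small and mechanical, gap. Your $N=2$ discussion, deferring the growth bound to the cited lemma, is fine.)
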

  We will find a solution of \eqref{eq1.2} in  $ Z_\e(\rho_1,3\delta_0)$.
 Next, we note that we can assume $f$ is bounded by similar arguments of \cite{Byeontanaka}.
  By (F1) and (F2), there is a sufficiently small $\alpha_0$ and a $C_0>0$ such that any 
  solution $u$ of \eqref{eq1.2} satisfies 
  \begin{equation}\label{eq2.12}
    \begin{aligned}
      -\Delta |u| + |u|& \leq \alpha_0 |u|+ C_0|u|^{\frac{N+2}{N-2}} \quad  & \mbox{if } N\geq 3,\\
      -\Delta |u| + |u| &\leq \alpha_0|u| +C_0(e^{\alpha_0|u|^2}-1) \quad  & \mbox{if } N=2.
    \end{aligned}
  \end{equation} 
If $u \in Z_\e(\rho_1,3\delta_0)$, then $\|u\|_\e$ is bounded by a constant independent of $\e$. By the  
elliptic estimates, there is a constant $K>0$ such that $\|u\|_{L^\infty(\R^N)}\leq K$. Therefore, we can 
set  $\tilde f(t) = f(t)$ if $t\leq 2K$,   $\tilde f(t)= f(2K)$ if $t\geq 2K$. Then there still holds
$\tilde f(t)\leq \alpha_0 t+ C_0(t^{\frac{N+2}{N-2}})$ if $N\geq 3$, and 
$\tilde f(t)\leq \alpha_0t + C_0(e^{\alpha_0t^2}-1)$ if $N=2$. 
As a result, for small $\e$, any solution in $Z_\e(\rho_1,3\delta_0)$ of \eqref{eq1.2} with $f$ replaced by $\tilde f$
satisfies the original equation. From now on, we can assume without loss of generality that $f$ satisfies further that
there is $\tilde K>0$ such that 
\begin{equation}\label{eq2.11}
|f(t)| \leq \tilde K \quad \mbox{for any } t\geq 0.
\end{equation}

    Take $\chi \in C^\infty(\mathbb R^N;[0,1])$ such that 
    \[\mbox{$\chi=1$ in $  \mathbb R^N\setminus  B(0,  2)$, $\chi =0$ in $ B(0,   1)$ and $|\nabla  {\chi}|\leq 2  $.}\]
    Setting $\chi_{\e, u}(x)= \chi \left(\e^\frac12 (x-\Upsilon(u)) \right)$, we 
    define
    $$\Phi_\varepsilon(u)=\left(\e^{-\frac12} \int_{\R^{N} }\chi_{\e, u} u^2 \mathrm dx-1\right)_+^2.
    $$
  Then,   easily we can check that
  \begin{lemma}\label{Phi'}
  There is  $C_0>0$ independent of $\varepsilon$ such that  for $u\in  Z_\e(\rho_1, 3\delta_0)$ and any $v\in H_\varepsilon$,
  \[\begin{aligned}
    \Bigg| \Phi_{\varepsilon}'(u)v- 4\Phi_{\varepsilon}(u)^{\frac12} \e^{-\frac12}\int_{\R^{N} }\chi_{\e, u} uv     \Bigg| 
    \leq  C_0\Phi_{\varepsilon}(u)^{\frac12}  \|v\|_\e  \int_{ \R^N\setminus B(\Upsilon (u), \e^{-\frac12})}  u^2.
  \end{aligned}\]
  In addition if $\supp v \subset \R^N \setminus B(\Upsilon(u), 4R_0)$ then 
  \[
    \Phi_{\varepsilon}'(u)v=4\Phi_{\varepsilon}(u)^{\frac12} \e^{-\frac12}\int_{\R^{N} }\chi_{\e, u} uv    
  \]
  \end{lemma}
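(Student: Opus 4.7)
The plan is to differentiate $\Phi_\varepsilon$ directly. Set
\[
A(u) = \e^{-\frac12}\int_{\R^N}\chi_{\e,u}(x)\, u^2\,\rd x - 1,
\]
so that $\Phi_\varepsilon(u)=(A(u))_+^2$. Since $s\mapsto (s_+)^2$ is $C^1$ on $\R$ with derivative $2s_+$, the chain rule gives $\Phi_\varepsilon'(u)v = 2(A(u))_+\,A'(u)v = 2\Phi_\varepsilon(u)^{1/2}\,A'(u)v$ for every $u\in Z_\e(\rho_1,3\delta_0)$ and $v\in H_\varepsilon$, independently of the sign of $A(u)$. It therefore suffices to compute $A'(u)v$ and to measure how much it differs from $2\e^{-1/2}\int\chi_{\e,u}uv\,\rd x$.

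The cutoff $\chi_{\e,u}(x)=\chi(\e^{1/2}(x-\Upsilon(u)))$ depends on $u$ only through the barycenter $\Upsilon(u)$, which is $C^1$ on $Z_\e$ by Lemma \ref{lem3.4}(ii). Differentiating under the integral sign and applying the product and chain rules,
\[
A'(u)v = -\int_{\R^N}\nabla\chi\!\left(\e^{1/2}(x-\Upsilon(u))\right)\cdot\Upsilon'(u)v\; u^2\,\rd x \;+\; 2\e^{-\frac12}\int_{\R^N}\chi_{\e,u}\,u\,v\,\rd x,
\]
where the factor $\e^{1/2}$ produced by differentiating the scaled cutoff cancels the external $\e^{-1/2}$ in the first term.

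Next I would estimate this first term. Since $\chi\equiv 0$ on $B(0,1)$ and $\chi\equiv 1$ outside $B(0,2)$, the integrand is supported in the annulus $\{\e^{-1/2}\leq|x-\Upsilon(u)|\leq 2\e^{-1/2}\}\subset\R^N\setminus B(\Upsilon(u),\e^{-1/2})$. Using $|\nabla\chi|\leq 2$ and the operator bound $|\Upsilon'(u)v|\leq D_1\|v\|_\varepsilon$ from Lemma \ref{lem3.4}(ii), I obtain
\[
\left|\int_{\R^N}\nabla\chi\!\left(\e^{1/2}(x-\Upsilon(u))\right)\cdot\Upsilon'(u)v\; u^2\,\rd x\right| \leq 2D_1\|v\|_\varepsilon\int_{\R^N\setminus B(\Upsilon(u),\e^{-1/2})}u^2.
\]
Multiplying through by $2\Phi_\varepsilon(u)^{1/2}$ yields the desired inequality with $C_0=4D_1$.

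For the second assertion, if $\supp v\subset\R^N\setminus B(\Upsilon(u),4R_0)$ then Lemma \ref{lem3.4}(iv) forces $\Upsilon'(u)v=0$, so the annular term in $A'(u)v$ vanishes identically and the identity is immediate. There is no real obstacle here: the proof is essentially a careful application of the chain rule, with the only subtle points being the bookkeeping of the $\e^{\pm 1/2}$ factors produced by the scaled cutoff and the use of the regularity and locality of the barycenter $\Upsilon$ already established in Lemma \ref{lem3.4}.
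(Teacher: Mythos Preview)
Your proof is correct and is exactly the direct computation the paper has in mind; the paper itself omits the argument entirely, saying only ``easily we can check that'' before stating the lemma. Your bookkeeping of the $\e^{\pm 1/2}$ factors, the support of $\nabla\chi$, and the use of Lemma~\ref{lem3.4}(ii),(iv) are all accurate, and the constant $C_0=4D_1$ you obtain is fine.
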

  Define the functional:
  \begin{equation}\label{2.3}
  \Gamma_{\varepsilon}(u)=\frac{1}{2}\int_{\R^N}(|\nabla u|^2+  V(\varepsilon x)u^2)
  - \int_{\mathbb R^N}F(u)+\Phi_{ \varepsilon}(u)  ,\quad u\in  Z_\e(\rho_1, 3\delta_0).
  \end{equation}
  We note that   $\Gamma_{\varepsilon}$ is well-defined and is of  class $C^1$ on $Z_\e(\rho_1, 3\delta_0)$.

  \begin{lemma}\label{lem3.12}
   Let $L>0$.
    If $u\in  Z_\e (\rho_1, 3\delta_0)$   satisfies
      $ \Gamma_{\varepsilon}(u) < L$,   
     then  there is  some constant $C(L)>0$ independent of $\varepsilon$ such that
     \begin{equation*}
      \begin{gathered}
          \Phi_{ \varepsilon}(u)+\|f(u)u\|_{L^1(\R^N)}+\|F(u)\|_{L^1(\R^N)}+ 
           \e^{-\frac12}\int_{\R^{N} }\chi_{\e, u}  u^2 \leq C(L).
      \end{gathered}
     \end{equation*}
  \end{lemma}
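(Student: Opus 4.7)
The plan is to observe that the claim follows almost immediately from the uniform bound $\|u\|_\e \leq D_2$ available on $Z_\e(\rho_1,3\delta_0)$ together with the boundedness of $f$ arranged in \eqref{eq2.11}. Concretely, I would first establish a pointwise estimate of the form $|f(t)t| + |F(t)| \leq C_\ast t^2$ for all $t \geq 0$: given any $\eta>0$, assumption (F1) supplies $\delta=\delta(\eta)>0$ with $|f(t)|\leq \eta t$ for $t\in[0,\delta]$, while \eqref{eq2.11} yields $|f(t)|\leq \tilde K$ for $t>\delta$. Splitting the range at $\delta$ and using the trivial inequality $t\leq t^2/\delta$ on $(\delta,\infty)$ then produces the desired quadratic control with a constant $C_\ast$ that depends only on $f$.

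Combining this pointwise inequality with $\|u\|_{L^2(\R^N)}^2 \leq \|u\|_\e^2 \leq D_2^2$, which holds because $\inf_{\R^N} V = 1$, one obtains
\[
\int_{\R^N} |f(u)u|\,\rd x + \int_{\R^N} |F(u)|\,\rd x \leq 2 C_\ast D_2^2,
\]
a quantity depending only on the data, independent of both $L$ and $\e$. With this in hand, the remaining assertions are algebraic. Rearranging the definition \eqref{2.3} gives
\[
\Phi_\e(u) = \Gamma_\e(u) - \tfrac12 \|u\|_\e^2 + \int_{\R^N} F(u)\,\rd x \leq L + 2 C_\ast D_2^2,
\]
which is the sought bound on $\Phi_\e$. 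For the final quantity in the statement, the definition $\Phi_\e(u)=\bigl(\e^{-1/2}\int_{\R^N}\chi_{\e,u} u^2\,\rd x - 1\bigr)_+^2$ yields a dichotomy: either $\e^{-1/2}\int_{\R^N} \chi_{\e,u} u^2\,\rd x\leq 1$, or else $\e^{-1/2}\int_{\R^N} \chi_{\e,u} u^2\,\rd x \leq 1 + \sqrt{\Phi_\e(u)} \leq 1 + \sqrt{L + 2 C_\ast D_2^2}$. In either case, the announced $C(L)$-bound follows.

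There is no serious obstacle; the lemma is a short bookkeeping exercise that exploits two ingredients already secured earlier in the preliminaries: the a priori $H_\e$-estimate on $Z_\e(\rho_1,3\delta_0)$, and the reduction to bounded $f$ via \eqref{eq2.11}. The one mildly delicate point worth noting is that the argument works uniformly across the two regimes $N=2$ and $N\geq 3$ without any appeal to (F2): the boundedness of $f$ makes the crude estimate $|F(t)|\lesssim t^2$ sufficient, so neither the Sobolev embedding nor the Trudinger--Moser inequality that one might otherwise invoke is required here.
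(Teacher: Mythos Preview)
Your proof is correct and follows essentially the same route as the paper's own argument: both use the uniform bound $\|u\|_\e\leq D_2$ on $Z_\e(\rho_1,3\delta_0)$ together with (F1) and \eqref{eq2.11} to obtain $|f(t)|\le Ct$ (hence $|f(t)t|+|F(t)|\le C_\ast t^2$), then rearrange the definition of $\Gamma_\e$ to bound $\Phi_\e(u)$, and finally invert the definition of $\Phi_\e$ to control $\e^{-1/2}\int\chi_{\e,u}u^2$. The only difference is that you spell out the pointwise estimate $|f(t)t|+|F(t)|\le C_\ast t^2$ explicitly, whereas the paper compresses this into the one-line remark ``by \eqref{eq2.11}''.
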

  \begin{proof}
    Clearly,
    $\|u\|_{\varepsilon }\leq C$ for some $C>0$ independent of  $L, \varepsilon$. Hence, 
    by \eqref{eq2.11},
    $\|f(u)u\|_{L^1(\R^N)}+\|F(u)\|_{L^1(\R^N)}\leq C$.
    Then,   we have 
     \[
      \Phi_{ \varepsilon}(u)  
      \leq \Gamma_{\varepsilon }(u)-\frac12 \|u\|_\e^2 +\int_{\R^N}F (u)  \leq C(L),
     \quad
     \e^{-\frac12}\int_{\R^{N} }\chi_{\e, u}  u^2 \leq  \Phi_{\e}(u)^\frac12+ 1 \leq C(L),
  \]
  which completes the proof.
  \end{proof}
 
 \subsection{A prior decay estimate}
 First we need the following lemma.
 \begin{lemma}\label{lemma 2.9}
  Let $\theta>1$, $b\geq 0$, $R_1, R>0$ be such that $R>R_1+1$. 
  Assume $Q(r)$ is a nonincreasing function in $[R_1, R]$  satisfying
  \[
    Q(r)\leq \theta^{-1} Q(r-1) +b \quad \mbox{for } r\in [R_1+1, R].
  \]
  Then 
  \[
  Q(R)\leq \theta^{R_1+1}Q(R_1) e^{-R\ln\theta}+\frac{\theta b}{ \theta-1}.  
  \]
 \end{lemma}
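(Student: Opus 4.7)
The plan is to iterate the one-step recurrence $Q(r) \le \theta^{-1}Q(r-1) + b$ along integer steps starting from $R_1$, obtain a geometric decay estimate at the integer points, and then use monotonicity of $Q$ to transfer the bound to the (generally non-integer) endpoint $R$.

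First I would fix a positive integer $k$ with $R_1 + k \le R$ and apply the hypothesis $k$ times by induction on $k$. This yields
\[
Q(R_1 + k) \le \theta^{-k} Q(R_1) + b\sum_{j=0}^{k-1}\theta^{-j}.
\]
Since $\theta>1$, the geometric series is bounded by $\tfrac{1}{1-\theta^{-1}} = \tfrac{\theta}{\theta-1}$, so
\[
Q(R_1 + k) \le \theta^{-k} Q(R_1) + \frac{\theta b}{\theta-1}.
\]
This is the main estimate at integer translates of $R_1$.

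Next I would choose $k$ to be as large as possible while keeping $R_1 + k \le R$, namely $k = \lfloor R - R_1\rfloor$. Since $R > R_1 + 1$, we have $k \ge 1$, and by construction $k > R - R_1 - 1$, hence $-k < R_1 + 1 - R$. The assumption that $Q$ is nonincreasing on $[R_1, R]$ gives $Q(R) \le Q(R_1 + k)$, and combining with the displayed inequality and $\theta^{-k} \le \theta^{R_1+1-R} = \theta^{R_1+1} e^{-R\ln\theta}$ produces the desired
\[
Q(R) \le \theta^{R_1+1} Q(R_1)\, e^{-R\ln\theta} + \frac{\theta b}{\theta-1}.
\]

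There is no real obstacle here; the only subtle point is that $R - R_1$ need not be an integer, which is why the monotonicity hypothesis is essential (both to pass from $Q(R_1+k)$ to $Q(R)$ and to absorb the loss of a factor of $\theta$ in converting $\theta^{-k}$ to $e^{-R\ln\theta}$, which is exactly the origin of the prefactor $\theta^{R_1+1}$ in the statement).
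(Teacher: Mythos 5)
Your proof is correct and follows essentially the same iteration-based argument as the paper's. The paper packages the iteration by subtracting the fixed point and tracking $(Q(r)-\tfrac{\theta b}{\theta-1})^{+}$ step by step, whereas you unroll the recurrence directly and sum the resulting geometric series; these are two presentations of the same one-step contraction combined with monotonicity of $Q$ to handle the non-integer gap $R-R_1$.
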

 \begin{proof}
 By the assumptions, we can get the conclusion from 
 \[
 (Q(R)- \frac{\theta b}{ \theta-1})^+ \leq \theta^{-1} (Q(R-1)- \frac{\theta b}{ \theta-1})^+\leq \theta^{-\lfloor R-R_1\rfloor} (Q(R_1)- \frac{\theta b}{ \theta-1}). \qedhere
 \]
 \end{proof}
 \begin{proposition}\label{lemdecay} 
 There is $\rho_0\in (0, \min\{\rho_1, \rho_2\})$ such that the following statement hold.  
  If  $u\in  Z_\e(\rho_0, 3\delta_0)$   satisfy
  \begin{gather*}
        \|\Gamma_{\e}' (u ) \|_{H_\e^{-1}}\leq b_\e\quad \mbox{for some   $b_\e\geq 0$},
  \end{gather*}
    then there are positive constants $C,c$   independent of $\e, b_\e, u$ such that for each $R \in (0,\e^{-\frac12})$,
  \[\int_{\R^N\setminus   B(\Upsilon(u), R)    }\left(|\nabla u|^2+  u^2  \right) \leq C(b_\e^2+e^{-cR}).\]  
 \end{proposition}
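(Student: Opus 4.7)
The plan is to reduce the statement to a one-step recursive inequality of the form $Q(r)\leq \theta^{-1}Q(r-1)+Cb_\e^2$ with some $\theta>1$, where
\begin{equation*}
Q(r):=\int_{\R^N\setminus B(\Upsilon(u),r)}(|\nabla u|^2+u^2),
\end{equation*}
valid for $r\in[R_*+1,\e^{-1/2}]$ and some constant $R_*>4R_0+1$ to be chosen. Combined with the crude bound $Q(R_*)\leq \|u\|_\e^2\leq D_2^2$, Lemma \ref{lemma 2.9} applied with $R_1=R_*$ and $b=Cb_\e^2$ yields $Q(R)\leq C(e^{-cR}+b_\e^2)$ for $R\in(R_*,\e^{-1/2})$, while the range $R\leq R_*$ is handled by enlarging the constant in front of $e^{-cR}$.

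To derive the iteration, introduce the radial cutoff $\eta_r(x)=\eta(|x-\Upsilon(u)|)$ with $\eta\equiv 0$ on $[0,r-1]$, $\eta\equiv 1$ on $[r,\infty)$ and $|\eta'|\leq 2$, set $w:=u\eta_r$, and test $\Gamma_\e'(u)$ against $u\eta_r^2$. Rearranging using the identity $\nabla u\cdot\nabla(u\eta_r^2)=|\nabla w|^2-u^2|\nabla \eta_r|^2$ and invoking $V\geq 1$ together with the sign $\Phi_\e'(u)(u\eta_r^2)\geq 0$ provided by Lemma \ref{Phi'} (valid since $r-1>4R_0$), I obtain
\begin{equation*}
\|w\|_\e^2\leq |\Gamma_\e'(u)(u\eta_r^2)|+\int f(u) u\eta_r^2+4(Q(r-1)-Q(r)).
\end{equation*}
For the functional term one has $\|u\eta_r^2\|_\e^2\leq C(Q(r-1)+\|w\|_\e^2)$, so that $|\Gamma_\e'(u)(u\eta_r^2)|\leq b_\e\|u\eta_r^2\|_\e\leq \mu Q(r-1)+\nu\|w\|_\e^2+C_{\mu,\nu} b_\e^2$ by Young's inequality, for any $\mu,\nu>0$.

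The essential ingredient is the nonlinear bound $\int f(u) u\eta_r^2\leq \tfrac{1}{2}\|w\|_{H^1}^2$ for $r\geq R_*$. For $N\geq 3$, (F1)--(F2) give $|f(t) t|\leq \alpha t^2+C_\alpha|t|^{2^*}$, and Hölder's inequality with exponents $(N/2,N/(N-2))$ followed by Sobolev embedding $\|w\|_{2^*}\leq C_S\|\nabla w\|_2$ yield
\begin{equation*}
\int u^{2^*}\eta_r^2\leq \|u\|_{L^{2^*}(|x-\Upsilon(u)|>r-1)}^{2^*-2}\,C_S^2\|\nabla w\|_2^2.
\end{equation*}
Since $u\in Z_\e(\rho_0,3\delta_0)$, one can select $U\in S_0$ and $y$ with $\|u-\phi_\e U(\cdot-y)\|_\e<\rho_0$ and $|\Upsilon(u)-y|\leq 2R_0$ (Lemma \ref{lem3.4}); Sobolev embedding applied to this closeness, together with the uniform exponential decay of $U$ from Lemma \ref{compact}, gives $\|u\|_{L^{2^*}(|x-\Upsilon(u)|>r-1)}\leq C(\rho_0+e^{-cr})$, arbitrarily small once $\rho_0$ is small and $r\geq R_*$ is large. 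Choosing $\alpha$, $\rho_0$ and $R_*$ appropriately then delivers the half-bound. The case $N=2$ proceeds analogously, using the exponential form of (F2) and a Moser--Trudinger inequality exploiting the uniform $H^1$ bound on $w$. Combining the two estimates, absorbing $(\nu+\tfrac12)\|w\|_\e^2$ into the left-hand side (using $\|w\|_{H^1}^2\leq \|w\|_\e^2$), and then using $Q(r)\leq \|w\|_{H^1}^2$, I arrive at $Q(r)\leq \theta^{-1}Q(r-1)+Cb_\e^2$ with $\theta>1$ once $\mu,\nu$ are chosen small enough, closing the iteration. The main obstacle is precisely this nonlinearity estimate: transferring the $H^1$-closeness of $u$ to $\phi_\e U(\cdot-y)$ into uniform smallness of the $L^{2^*}$ (or Orlicz) tail of $u$ outside $B(\Upsilon(u),R_*)$ is what ultimately dictates the smallness choice of $\rho_0$ in the proposition.
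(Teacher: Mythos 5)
Your proposal is correct and follows essentially the same strategy as the paper: test $\Gamma_\e'(u)$ against a radial cutoff multiplied by $u$ supported outside $B(\Upsilon(u),4R_0)$, use Lemma~\ref{Phi'} to discard the penalty term $\Phi_\e'(u)$ by sign, close a one-step recursion for $Q(r)$, and invoke Lemma~\ref{lemma 2.9}. The one place where you diverge in detail is the treatment of the nonlinearity. The paper exploits the reduction~\eqref{eq2.11} to a bounded $f$ to write $f(u)u\le \frac14 u^2+C_p|u|^p$ with a fixed \emph{subcritical} $p\in(2,2^*)$ (and $p=3$ when $N=2$); then $\int\psi_r|u|^p$ is controlled by a superlinear power of $Q(r-1)$ via Sobolev, and the absorption is achieved by the smallness $Q(r-1)\le 2\rho_0^2$ from~\eqref{eq2.15}, which is itself a consequence of the closeness of $u$ to the exponentially decaying profiles in $S_0$. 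You instead work at the critical exponent $2^*$, bound $\int u^{2^*}\eta_r^2$ by $\|u\|_{L^{2^*}(\mathrm{tail})}^{2^*-2}\,C_S^2\|\nabla w\|_2^2$ via H\"older, and then argue that the $L^{2^*}$-tail is small using the same compactness of $S_0$ and Lemma~\ref{compact}. Both routes rely on exactly the same structural input (exponential decay of the limit profiles, smallness of $\rho_0$) and deliver the same contraction constant up to cosmetic differences; the paper's subcritical choice is somewhat cleaner, and in particular handles $N=2$ identically to $N\ge3$ without invoking Moser--Trudinger, whereas your $N=2$ sketch invokes Moser--Trudinger unnecessarily given that $f$ can be assumed bounded by~\eqref{eq2.11}. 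Your use of the squared cutoff $u\eta_r^2$ versus the paper's $\psi_r u$, and the crude bound $Q(R_*)\le D_2^2$ in place of~\eqref{eq2.15} at the base level, are immaterial variations.
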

 \begin{proof}
  We only need to prove the result for large $R$.
  First note that,   by Lemma \ref{lem3.4} and the compactness of $S_0$, for each given $\rho_0\in (0, \min\{\rho_1, \rho_2\})$,  there is $R_1>4R_0$ such that 
  \begin{equation}\label{eq2.15}
    \sup_{u\in  Z_\e(\rho_0, 3\delta_0)}  \int_{\R^N\setminus   B(\Upsilon(u), R_1)    }  \left(|\nabla u|^2+  u^2  \right)  \leq 2 \rho_0^2.
  \end{equation}
  For  $R\in[R_1+1, \varepsilon^{-\frac12}]$ and $r \in [R_1+1, R] $, we take $\psi_r \in C^1(\mathbb R^N,[0,1])$
  such that $| \nabla \psi_r |\leq 2$ and
 \[
 \psi_r (x)=
 \left\{\begin{aligned}
 &0 & &{\rm if} & &x\in B(\Upsilon (u),   r-1),\\
 &1 & &{\rm if} & &x\in \R^N\setminus   B(\Upsilon (u),   r),
 \end{aligned}\right.
 \]
By Lemma \ref{Phi'} and $\supp (\psi_r u) \subset \R^N\setminus B(\Upsilon(u), 4R_0)$, we have 
\[
\Phi_\e'(u)(\psi_r u)= 4 \Phi_{\varepsilon}(u)^{\frac12} \e^{-\frac12}\int_{\R^{N} }\chi_{\e, u} \psi_r u^2\geq 0. 
\]
Then   
we have
\begin{equation}\label{eq4.2} \begin{aligned}
    \Gamma_{\e}' (u )(\psi_r u)\geq&
  \int_{\R^N}\psi_r (|\nabla u|^2+ {V}_\varepsilon u^2-f(u)u)  
  +\int_{\R^N} u\nabla \psi_r \nabla u\\
  \geq & \int_{\R^N}\psi_r (|\nabla u|^2+ {V}_\varepsilon u^2-f(u)u) -\int_{\supp |\nabla \psi_r|} ( |\nabla u|^2 +  u^2)
\end{aligned}
\end{equation}

On the other hand,
we have 
\begin{equation}\label{eq4.2'}\begin{aligned}
  \Gamma_{\e}' (u )(\psi_r u)\leq& b_\e \|\psi_r u\|_\e\\
  \leq&  
b_\e^2 + \frac{1}{4} \int_{\R^N}  (|\nabla(\psi_r u)|^2+ {V}_\varepsilon (\psi_r u)^2)
  \\
  \leq &  
  b_\e^2 +
  \frac{1}{4}\int_{\R^N}\psi_r  (|\nabla u|^2+ {V}_\varepsilon u^2 )  
+  2\int_{\supp |\nabla \psi_r|} ( |\nabla u|^2 +  u^2).
\end{aligned}
\end{equation}

By \eqref{eq2.11} and (F1), for $p\in (2, \frac{2N}{N-2})$ if $N\ge 3$ and $p=3$ if $N=2$, there is $C_p>0$ such that 
 \[
 f(u)u \leq \frac{1}{4} u^2 + C_p |u|^p.
 \]
 Then
 setting 
 \[
 Q(r)=\int_{\R^N\setminus B(\Upsilon(u), r)} |\nabla u|^2+   u^2,
 \]
   from  \eqref{eq4.2}, \eqref{eq4.2'}, and  the Sobolev inequality, we conclude
\[\begin{aligned}
  b_\e^2 \geq & \frac12 Q(r) -
  C_p\int_{\mathbb R^N}\psi_r |u|^p  -3(Q(r-1)-Q(r))\\
   \geq &  \frac72Q(r) -C_NC_p (Q(r-1))^p -3Q(r-1),
\end{aligned}
\]
where $ C_{ N}>0$ is a constant  depending only on   $N$.  By \eqref{eq2.15}, $Q(r-1)\leq \sqrt{2} \rho_0$.
Taking $\rho_0>0$ small such that
$C_NC_p(\sqrt2\rho_0)^{p-1}<1/2$, we can  complete the proof by Lemma \ref{lemma 2.9}.
 \end{proof}
 
\begin{corollary}\label{cor2.11}
  Under the assumptions of Proposition \ref{lemdecay}, if $\e^{-1/4}b_\e\to 0$ as $\e\to 0$, then 
  $\Phi_\e(u)=0$ and $\Phi_\e'(u)=0$ for each small $\e$.
\end{corollary}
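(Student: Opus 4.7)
The plan is to apply the \emph{a priori} decay estimate of Proposition \ref{lemdecay} at a radius comparable to the scale at which the penalization activates, namely $R \sim \e^{-1/2}$. First I would observe that by the choice of the cutoff $\chi$ (which vanishes on $B(0,1)$), we have $\chi_{\e,u}(x) = 0$ whenever $|x - \Upsilon(u)| \le \e^{-1/2}$, so
\[
\int_{\R^N} \chi_{\e,u}\, u^2 \;\le\; \int_{\R^N \setminus B(\Upsilon(u), \e^{-1/2})} u^2.
\]
Next I would apply Proposition \ref{lemdecay} with $R = \tfrac12 \e^{-1/2}$, which lies in $(0,\e^{-1/2})$ for small $\e$, yielding
\[
\int_{\R^N \setminus B(\Upsilon(u), \e^{-1/2})} u^2 \;\le\; \int_{\R^N \setminus B(\Upsilon(u), \tfrac12\e^{-1/2})} u^2 \;\le\; C\bigl(b_\e^2 + e^{-c\e^{-1/2}/2}\bigr).
\]

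Then I would multiply through by $\e^{-1/2}$ and exploit the hypothesis. Writing $\e^{-1/2} b_\e^2 = (\e^{-1/4}b_\e)^2$, the assumption $\e^{-1/4}b_\e \to 0$ forces this term to zero; the remaining contribution $\e^{-1/2} e^{-c\e^{-1/2}/2}$ is obviously negligible. Hence $\e^{-1/2}\int_{\R^N} \chi_{\e,u}\,u^2 \to 0$, and in particular this quantity is strictly less than $1$ for all sufficiently small $\e$. By the very definition $\Phi_\e(u) = \bigl(\e^{-1/2}\int \chi_{\e,u}\,u^2 - 1\bigr)_+^2$, the positive part then vanishes identically, giving $\Phi_\e(u) = 0$.

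For the derivative, I would invoke Lemma \ref{Phi'}, which controls $\Phi_\e'(u)v$ via an inequality in which both the main term $4\Phi_\e(u)^{1/2}\e^{-1/2}\int \chi_{\e,u} u v$ and the error bound factor through $\Phi_\e(u)^{1/2}$. Since we have just shown $\Phi_\e(u) = 0$, both sides collapse and we read off $\Phi_\e'(u)v = 0$ for every $v \in H_\e$, i.e.\ $\Phi_\e'(u)=0$. There is no genuine obstacle: the whole argument is a matching of scales, and the hypothesis $\e^{-1/4}b_\e \to 0$ is precisely calibrated so that the tail mass $\int \chi_{\e,u} u^2$, after multiplication by the penalization factor $\e^{-1/2}$, still goes to zero.
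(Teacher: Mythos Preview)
Your argument is correct and is precisely the intended one: the paper states Corollary~\ref{cor2.11} without proof, and your reasoning---applying Proposition~\ref{lemdecay} at a radius of order $\e^{-1/2}$, rescaling, and noting that both terms in Lemma~\ref{Phi'} carry the factor $\Phi_\e(u)^{1/2}$---is exactly what the authors have in mind. One could shorten the final step by observing directly that $t\mapsto (t)_+^2$ has vanishing derivative wherever $t\le 0$, but invoking Lemma~\ref{Phi'} as you do is equally valid.
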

\section{Gradient estimates}
The following $\e$-dependent concentration compactness result gives a uniform 
lower gradient estimate in $ Z_\e(\rho_0,3\delta_0 )\setminus Z_\e(\frac13\rho_0, 3\delta_0 )$.
\begin{proposition}\label{concompact}
  Suppose
  $\varepsilon_n\to0, u_n\in  Z_\e(\rho_0,3\delta_0 )$   satisfying
\begin{equation}\label{13}
 \limsup_{n\to\infty}\Gamma_{\varepsilon_{n}} (u_{n} ) \leq   E_{V_0},\quad
 \lim_{n\to\infty} \|\Gamma_{\varepsilon_{n}}  (u_{n} ) \|_{H^{-1}_\e}  = 0.
\end{equation}
Then there exist
$U\in S_0$ and 
$ z_{n} \in \frac{1}{\varepsilon_n} \Omega$ such that as $n \to \infty$ (after extracting a subsequence if necessary)
\begin{equation*}
  \|u_{n}-   (\phi_{\e_n}U)  (\cdot-z_{n} )\|_{ {\varepsilon_n}}\to 0.
\end{equation*}
\end{proposition}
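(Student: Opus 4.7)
My plan combines the standard concentration-compactness recipe with the a priori decay estimate of Proposition \ref{lemdecay}, which lets me sidestep the usual difficulties with vanishing and dichotomy. As a preliminary, since $\|\Gamma'_{\e_n}(u_n)\|_{H^{-1}_{\e_n}}\to 0$ trivially implies $\e_n^{-1/4}\|\Gamma'_{\e_n}(u_n)\|_{H^{-1}_{\e_n}}\to 0$, Corollary \ref{cor2.11} kills the penalization: $\Phi_{\e_n}(u_n)=\Phi'_{\e_n}(u_n)=0$ for large $n$, so $u_n$ is an $H^{-1}_{\e_n}$-approximate critical point of the unpenalized $J_{\e_n}$, satisfying $-\Delta u_n+V(\e_n x)u_n=f(u_n)+o_{H^{-1}}(1)$.

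Next, I would set $z_n:=\Upsilon(u_n)$ and $w_n(y):=u_n(y+z_n)$. The inclusion $u_n\in Z_{\e_n}(\rho_0,3\delta_0)$ forces $\e_n z_n\in O^{3\delta_0}\subset\Omega$, so up to a subsequence $\e_n z_n\to x_0\in O^{3\delta_0}$, and $w_n\rightharpoonup w$ weakly in $H^1(\R^N)$. Proposition \ref{lemdecay} then supplies a uniform tail bound $\int_{|y|\geq R}(|\nabla w_n|^2+w_n^2)\leq Ce^{-cR}$, which together with Rellich--Kondrachov yields $w_n\to w$ strongly in $L^p(\R^N)$ for $p\in[2,2^*)$ (all finite $p$ when $N=2$). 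Passing to the limit in the translated equation, using continuity of $V$ at $x_0\in\overline\Omega$ from \textbf{(V2)}, gives $-\Delta w+V(x_0)w=f(w)$. To secure $w\not\equiv 0$, I would exploit the defining property of $Z_{\e_n}$: there exist $\tilde U\in S_0$ and $y_n$ with $\|u_n-(\phi_{\e_n}\tilde U)(\cdot-y_n)\|_{\e_n}<\rho_0$; Lemma \ref{lem3.4}(i) then forces $|z_n-y_n|\leq 2R_0$, and the mass lower bound \eqref{equa 3.10} yields $\|w_n\|_{L^2(B(0,3R_0))}\geq \tfrac{3}{4}\rho_1-\rho_0>0$ for $\rho_0$ sufficiently small, a bound transferred to $w$ by strong $L^2$ convergence on bounded sets; the maximum principle then gives $w>0$.

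To upgrade to strong $H^1$ convergence, I would test the translated equation against $w_n$: strong $L^p$ convergence handles $\int f(w_n)w_n\to\int f(w)w$, the tail bound plus local uniform convergence of $V$ at $x_0$ gives $\int V(\e_n\cdot+\e_n z_n)w_n^2\to\int V(x_0)w^2$, and comparison with the limit equation tested against $w$ forces $\|\nabla w_n\|_2\to\|\nabla w\|_2$, hence $w_n\to w$ in $H^1$. Consequently $J_{\e_n}(u_n)\to L_{V(x_0)}(w)\leq E_{V_0}$, and by \eqref{equ2.3}, $m:=V(x_0)\in[V_0-\delta_0,V_0]$. Letting $y^*$ be the (unique) maximum point of $w$ and $U:=w(\cdot-y^*)$, we have $U\in K_m\subset S_0$ (a rescaling $U(e^{-\theta}\cdot)$ is absorbed into the definition of $S_0$ when $N=2$). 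Setting $z_n':=z_n+y^*$, the splitting
\[
u_n-(\phi_{\e_n}U)(\cdot-z_n')=\bigl[u_n-U(\cdot-z_n')\bigr]-\bigl((1-\phi_{\e_n})U\bigr)(\cdot-z_n')
\]
completes the proof: the first piece vanishes in $\|\cdot\|_{\e_n}$ by strong $H^1$ convergence of $w_n$ (together with the local boundedness of $V$ on the region $\e_n(\supp w_n-z_n')$), and the second by exponential decay of $U$ together with $\phi_{\e_n}=1$ on $B(0,\delta_0/(2\e_n))$.

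The delicate point is the nontriviality $w\not\equiv 0$: without the mass lower bound that the barycenter condition imposes via Lemma \ref{lem3.4} and \eqref{equa 3.10}, nothing in the argument would prevent $w_n$ from vanishing weakly in $H^1$. Once this is secured and the penalization is eliminated by Corollary \ref{cor2.11}, the exponential tail bound rules out dichotomy a priori, so the remaining analysis reduces to routine concentration compactness; in particular, the inequality $2E_{V_0-\delta_0}>E_{V_0}$ from \eqref{equ2.3} is not needed here, though it plays its usual role elsewhere in the paper.
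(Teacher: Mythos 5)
Your overall strategy is sound and tracks the paper's proof closely, with a genuine streamlining: invoking Corollary~\ref{cor2.11} at the outset to kill the penalization, and reading Proposition~\ref{lemdecay} as a tail bound that forecloses dichotomy directly, so that the explicit Lions'-lemma step the paper runs (Step 1, on $v_n=u_n-(\phi_{\e_n}W)(\cdot-y_n)$) becomes unnecessary. The nontriviality of the weak limit via Lemma~\ref{lem3.4}(i) and \eqref{equa 3.10}, the identification of the limiting equation, and the conclusion $U=w(\cdot-y^*)\in K_m\subset S_0$ for $m=V(x_0)\in[V_0-\delta_0,V_0]$ all match the paper's reasoning.

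There is, however, a real gap in the endgame, coming from the fact that $(V1)$ gives only $V\in L^\infty_{loc}$ with $\inf V>0$ and no global upper bound; the cutoff $\phi_\e$ exists in $S_\e(\Omega)$ precisely for this reason. Two of your steps implicitly treat $V$ as bounded. First, the claim that ``the tail bound plus local uniform convergence of $V$ gives $\int V(\e_n\cdot+\e_n z_n)w_n^2\to V(x_0)\int w^2$'' does not follow: Proposition~\ref{lemdecay} controls $\int_{|y|\ge R}w_n^2$, not $\int_{|y|\ge R}V(\e_n(y+z_n))w_n^2$, and the latter tail could be large if $V$ grows. The correct route is to pass to the limit in $\int|\nabla w_n|^2+V_{\e_n}(\cdot+z_n)w_n^2=\int f(w_n)w_n+o_n(1)\to\int f(w)w=\int|\nabla w|^2+V(x_0)w^2$, and then use weak lower semicontinuity of each of the two left-hand terms to identify their individual limits. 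Second, the algebraic splitting $u_n-(\phi_{\e_n}U)(\cdot-z_n')=[u_n-U(\cdot-z_n')]-\bigl((1-\phi_{\e_n})U\bigr)(\cdot-z_n')$ is not available in $H_{\e_n}$: if $V$ grows superexponentially, neither bracket has finite $\|\cdot\|_{\e_n}$-norm, even though their difference does. You should instead estimate $\|u_n-(\phi_{\e_n}U)(\cdot-z_n')\|_{\e_n}^2$ by expanding the square; both the cross term and $\|(\phi_{\e_n}U)(\cdot-z_n')\|_{\e_n}^2$ live on $\supp\phi_{\e_n}(\cdot-z_n')$, where $\e_n x\in O^{5\delta_0}\subset\overline\Omega$ and hence $V_{\e_n}\le V_0$, and they converge using $w_n\rightharpoonup w$ in $H^1$ together with the exponential decay of $U$. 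This is what the paper's Step 2 accomplishes by testing the equation against $v_n$.
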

The proof of Proposition \ref{concompact} is standard, and will be given in the Appendix.
For $c\in\R$, we set 
\[
\Gamma_\e^{c}:=\set{u\in H_\e | \Gamma_\e(u)\leq c}.  
\]
Let 
$c_\e$ be a sequence to be determined such that $c_\e\to E_{V_0}$ as $\e\to 0$.
By Proposition \ref{concompact},
we have 
\begin{corollary}\label{cor3.2}
  There is $\nu_1, \e_1>0$ such that for any $\e\in(0,\e_1)$, 
 \[ \|\Gamma_\e'(u)\|_{H^{-1}_\e}\geq 2\nu_1\quad \mbox{provided that } u\in \left(Z_\e(\rho_0,3\delta_0 )\setminus Z_\e(\frac13\rho_0, 3\delta_0 ) \right)\cap \Gamma_\e^{c_\e}.\]
\end{corollary}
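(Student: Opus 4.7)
The plan is to argue by contradiction, directly invoking the concentration–compactness statement of Proposition \ref{concompact}. Suppose the conclusion fails. Then there exist sequences $\varepsilon_n\to 0$ and
\[
u_n \in \left(Z_{\varepsilon_n}(\rho_0,3\delta_0)\setminus Z_{\varepsilon_n}(\tfrac13\rho_0,3\delta_0)\right)\cap \Gamma_{\varepsilon_n}^{c_{\varepsilon_n}}
\]
such that $\|\Gamma_{\varepsilon_n}'(u_n)\|_{H^{-1}_{\varepsilon_n}}\to 0$. Since $c_{\varepsilon_n}\to E_{V_0}$, the hypothesis $\Gamma_{\varepsilon_n}(u_n)\le c_{\varepsilon_n}$ yields $\limsup_{n\to\infty} \Gamma_{\varepsilon_n}(u_n)\le E_{V_0}$, so the two conditions \eqref{13} required for Proposition \ref{concompact} are met.

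Applying Proposition \ref{concompact}, after passing to a subsequence, there exist $U\in S_0$ and points $z_n\in \tfrac1{\varepsilon_n}\Omega$ with
\[
\|u_n-(\phi_{\varepsilon_n}U)(\cdot-z_n)\|_{\varepsilon_n}\to 0 \quad\text{as } n\to\infty.
\]
By the very definition of $S_{\varepsilon_n}(\Omega)$, the translate $(\phi_{\varepsilon_n}U)(\cdot-z_n)$ lies in $S_{\varepsilon_n}(\Omega)$, and therefore
\[
\dist_{H_{\varepsilon_n}}\!\bigl(u_n,\, S_{\varepsilon_n}(\Omega)\bigr)\,\le\, \|u_n-(\phi_{\varepsilon_n}U)(\cdot-z_n)\|_{\varepsilon_n} \to 0.
\]
In particular, for all sufficiently large $n$ we have $\dist_{H_{\varepsilon_n}}(u_n,S_{\varepsilon_n}(\Omega))<\tfrac13\rho_0$. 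Since the membership $u_n\in Z_{\varepsilon_n}(\rho_0,3\delta_0)$ already guarantees the barycenter condition $\dist(\varepsilon_n\Upsilon(u_n),O)<3\delta_0$, both defining inequalities for $Z_{\varepsilon_n}(\tfrac13\rho_0,3\delta_0)$ are then satisfied. This contradicts the choice $u_n\notin Z_{\varepsilon_n}(\tfrac13\rho_0,3\delta_0)$, and the desired uniform lower bound $\|\Gamma_\varepsilon'(u)\|_{H^{-1}_\varepsilon}\ge 2\nu_1$ follows for some $\nu_1>0$ and all sufficiently small $\varepsilon$.

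There is essentially no obstacle at this step beyond quoting Proposition \ref{concompact}; the genuine work is concentrated in that proposition (postponed to the appendix), where one has to rule out escape of mass and ensure that the penalization term $\Phi_\varepsilon$ is compatible with the limit procedure. The corollary itself is a one-step compactness consequence, and the role of the bound $\Gamma_{\varepsilon}(u)\le c_\varepsilon$ is only to activate that proposition's energy ceiling.
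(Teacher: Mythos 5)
Your proof is correct and is essentially the argument the paper intends: the paper simply states the corollary as an immediate consequence of Proposition~\ref{concompact}, and you have filled in the routine details of the contradiction argument (taking $\nu_1=\varepsilon_1=1/n$ to force $\varepsilon_n\to 0$, verifying hypotheses \eqref{13}, and observing that the translate $(\phi_{\varepsilon_n}U)(\cdot-z_n)$ with $U\in S_0$ and $\varepsilon_n z_n\in\Omega$ lies in $S_{\varepsilon_n}(\Omega)$, so the distance to $S_{\varepsilon_n}(\Omega)$ collapses and $u_n$ eventually re-enters $Z_{\varepsilon_n}(\tfrac13\rho_0,3\delta_0)$, a contradiction).
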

The main aim in this section is to give the following   gradient estimate in $Z_\e(\rho_0,3\delta_0 )\setminus Z_\e(\rho_0, \delta_0 )$.
\begin{proposition}\label{gradient estimate 2}
  There exist $\nu_2, \e_2>0$ such that for $\e\in (0, \e_2)$,
  \[
      \|\Gamma_\e'(u)\|_{H^{-1}_\e}\geq 2\nu_2\e\quad \mbox{provided that } u\in \left(Z_\e(\rho_0,3\delta_0 )\setminus Z_\e(\rho_0, \delta_0)\right)\cap \Gamma_\e^{c_\e}.  
  \]
\end{proposition}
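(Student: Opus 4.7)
I argue by contradiction: assume sequences $\e_n\to 0$ and $u_n\in(Z_{\e_n}(\rho_0,3\delta_0)\setminus Z_{\e_n}(\rho_0,\delta_0))\cap\Gamma_{\e_n}^{c_{\e_n}}$ with $\|\Gamma_{\e_n}'(u_n)\|_{H_{\e_n}^{-1}}=o(\e_n)$. Since $\e_n^{-1/4}\cdot o(\e_n)\to 0$, Corollary \ref{cor2.11} kills the penalty: $\Phi_{\e_n}(u_n)=0$ and $\Phi_{\e_n}'(u_n)=0$, so $J_{\e_n}'(u_n)=\Gamma_{\e_n}'(u_n)$ in $H_{\e_n}^{-1}$, and Proposition \ref{lemdecay} supplies a uniform exponential tail bound for $u_n$ off $B(\Upsilon(u_n),R)$ with remainder $C(o(\e_n^2)+e^{-cR})$. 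Setting $x_n:=\e_n\Upsilon(u_n)$, the hypothesis $u_n\notin Z_{\e_n}(\rho_0,\delta_0)$ together with $u_n\in Z_{\e_n}(\rho_0,3\delta_0)$ places $x_n\in O^{3\delta_0}\setminus O\subset\Omega$, so \eqref{equ2.2} yields $|\nabla V(x_n)|\ge c_0>0$ uniformly in $n$; write $\nu_n:=\nabla V(x_n)/|\nabla V(x_n)|$.

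\textbf{Smooth companion.} Following the strategy laid out in the introduction, I would replace $u_n$ by its Riesz companion $w_n\in H_{\e_n}$ defined by $-\Delta w_n+V(\e_n x)w_n=f(u_n)$, so that $u_n-w_n$ is the $H_{\e_n}$-gradient of $J_{\e_n}$ at $u_n$ and hence $\|u_n-w_n\|_{\e_n}=o(\e_n)$. Because $f(u_n)\in L^\infty\cap L^2$ uniformly (by \eqref{eq2.11} and (F1)), elliptic regularity places $w_n$ in $H^2(\R^N)$ with uniform bounds, and a barrier comparison transfers the exponential tail estimate from $u_n$ to $w_n$.

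\textbf{Pohozaev-type testing in the $\nu_n$-direction.} Since $w_n\in H^2(\R^N)$ has exponential decay, $\partial_{\nu_n}w_n\in H^1(\R^N)$ is a legitimate test function for the equation satisfied by $w_n$. Integrating by parts (localizing to $\Omega$, where $V\in C^1$, at the cost of exponentially small cutoff errors), the kinetic term $\int\nabla w_n\cdot\nabla\partial_{\nu_n}w_n=\tfrac12\int\partial_{\nu_n}|\nabla w_n|^2$ vanishes and $\int V(\e_n x)w_n\partial_{\nu_n}w_n=-\tfrac{\e_n}{2}\int\nabla V(\e_n x)\cdot\nu_n\,w_n^2$, producing
\begin{equation*}
-\frac{\e_n}{2}\int_{\R^N}\nabla V(\e_n x)\cdot\nu_n\,w_n^2\,\rd x=\int_{\R^N}f(u_n)\,\partial_{\nu_n}w_n\,\rd x.
\end{equation*}
I would then split the right-hand side as $\int f(u_n)\partial_{\nu_n}u_n+\int f(u_n)\partial_{\nu_n}(w_n-u_n)$: the first term equals $\int\partial_{\nu_n}F(u_n)\,\rd x=0$ by the chain rule in $H^1$ (valid since $F\in C^1$) together with $F(u_n)\in W^{1,1}(\R^N)$ inherited from the tail bound, and the second is bounded by $\|f(u_n)\|_{L^2}\|w_n-u_n\|_{H^1}=o(\e_n)$. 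Hence $\int\nabla V(\e_n x)\cdot\nu_n\,w_n^2=o(1)$.

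\textbf{Conclusion and main obstacle.} The proof closes by bounding the left-hand side below: since $\e_n R\to 0$ for any fixed $R$, $\nabla V(\e_n x)\cdot\nu_n\to|\nabla V(x_n)|\ge c_0$ uniformly on $B(\Upsilon(u_n),R)$, and combined with the exponential tail plus the uniform lower bound $\|u_n\|_{L^2}\ge c>0$ (inherited from $\rho_0$-proximity to $(\phi_{\e_n}U)(\cdot-y_n)$ for some $U\in S_0$), this forces $\int\nabla V(\e_n x)\cdot\nu_n\,w_n^2\ge c_0 c^2/2$ for $n$ large, contradicting the $o(1)$ estimate above. I expect the main obstacle to be precisely the handling of $\int f(u_n)\partial_{\nu_n}w_n$: under Berestycki--Lions $f$ is merely continuous, so $\|f(u_n)-f(w_n)\|_{L^2}$ cannot be bounded by $\|u_n-w_n\|_{\e_n}$, which blocks the naive transfer of $\|J_{\e_n}'(u_n)\|_{H_{\e_n}^{-1}}=o(\e_n)$ to $\|J_{\e_n}'(w_n)\|_{H_{\e_n}^{-1}}=o(\e_n)$. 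The split above circumvents this by routing all non-smooth information through the exact identity $\int\partial_{\nu_n}F(u_n)=0$, leaving only a harmless $L^2$-pairing where $H^1$-proximity of $u_n$ and $w_n$ suffices.
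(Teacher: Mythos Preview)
Your proof follows the paper's own argument closely: contradiction, Corollary~\ref{cor2.11} and Proposition~\ref{lemdecay} to eliminate the penalty and obtain decay, the smooth companion $w_n$ solving $-\Delta w_n+V(\e_n\cdot)w_n=f(u_n)$ with $\|u_n-w_n\|_{\e_n}=o(\e_n)$, and a Pohozaev-type test in the direction of $\nabla V$. The paper organizes the last step slightly differently---it first invokes Proposition~\ref{concompact} to pass to a fixed limit profile $U\in S_0$ and center $z_0$, and it tests the equation for $w_\e$ with $\partial_{x_1}(\psi_\e u_\e)$ for an explicit cutoff $\psi_\e$ at scale $\e^{-1/2}$, then swaps $u_\e$ for $w_\e$ in the kinetic and potential terms---but the underlying identity is the same as your split $\int f(u_n)\partial_{\nu_n}w_n=\int\partial_{\nu_n}F(u_n)+\int f(u_n)\partial_{\nu_n}(w_n-u_n)$.

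The one place your write-up is genuinely loose is the claim that $w_n\in H^2(\R^N)$ with uniform bounds. Under (V1) the potential is only $L^\infty_{\mathrm{loc}}$ and may be unbounded at infinity, so $V(\e_n\cdot)w_n$ need not lie in $L^2(\R^N)$ and hence neither does $\Delta w_n$; in particular $\partial_{\nu_n}w_n$ is not, as written, an admissible $H^1$ test function on all of $\R^N$. This is exactly why the paper only asserts the \emph{local} bound $\|\Delta w_\e\|_{L^2(B(z_\e,3\e^{-1/2}))}\le C$ and works with a compactly supported test function from the start. Your parenthetical ``localizing to $\Omega$'' is the right fix (and also handles the fact that $V\in C^1$ is assumed only on $\overline\Omega$, without which the integration by parts in the potential term is unjustified), but until a cutoff is actually inserted the identities $\int\nabla w_n\cdot\nabla\partial_{\nu_n}w_n=0$ and $\int V(\e_n\cdot)w_n\,\partial_{\nu_n}w_n=-\tfrac{\e_n}{2}\int(\nabla V(\e_n\cdot)\cdot\nu_n)w_n^2$ remain formal. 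Once you cut off at scale $\e_n^{-1/2}$ around $\Upsilon(u_n)$, the decay from Proposition~\ref{lemdecay} makes all boundary terms $o(\e_n)$ and your argument goes through.
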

\begin{proof}
Assume by contradiction that    there is a sequence $u_\e\in \left(Z_\e(\rho_0,3\delta_0 )\setminus Z_\e( \rho_0, \delta_0 )\right)\cap \Gamma_\e^{c_\e}$ such that 
as $\e\to 0$, $\|\Gamma_\e'(u_\e)\|_{H^{-1}_\e}=o_\e(\e)$.
By Proposition \ref{concompact}, up to a subsequence,  
there exist
$U\in S_0$ and 
$ z_\e \in \frac{1}{\varepsilon } \Omega$ such that  
  $\|u_\e-   (\phi_{\e }U)  (\cdot-z_\e )\|_{ {\varepsilon }}\to 0.$
 By Lemma \ref{lem3.4} (i), $|z_\e-\Upsilon(u_\e)|\leq 2R_0$. Hence, $\e z_\e \to z_0\in \overline{O^{3\delta_0}\setminus O^{\delta_0}}$.
By $\inf_{O^{3\delta_0}\setminus O^{\delta_0}}|\nabla V|>0$, we assume without loss of generality that $\frac{\partial V}{\partial x_1}(z_0)>0$ and 
$\frac{\partial V}{\partial x_1}(\e x)>0$
 for $x\in B(z_\e, 2\e^{-1/2})$.
 By Corollary \ref{cor2.11} and Proposition \ref{lemdecay}, we have 
$\Phi_\e(u_\e)=0$, $\Phi_\e'(u_\e)=0$, and 
 \begin{equation}\label{eq3.3}
  \int_{\R^N\setminus B(z_\e, 2\e^{-1/2})} |\nabla u_\e|^2 + |u_\e|^2 = o_\e(\e).
  \end{equation}
  By (F1) and  \eqref{eq2.11}, for some $C>0$, $|f(t)|\leq Ct$. Then 
  \begin{equation}\label{eq3.3f}
    \int_{\R^N\setminus B(z_\e, 2\e^{-1/2})} |f(u_\e)u_\e| + |F(u_\e)| = o_\e(\e).
    \end{equation}
 We next consider $w_\e\in H_\e$ defined by the unique solution to the following equation 
 \begin{equation}\label{eq3.2}
  -\Delta w_\e +V_\e w_\e =f(u_\e), \quad w_\e\in H_\e.
 \end{equation}
 We note that \eqref{eq3.2} is solvable by Riesz representation. 
 Then
 \[
 \|w_\e\|_\e^2\leq \int_{\R^N} f(u_\e) w_\e \leq C\|u_\e\|_\e \|w_\e\|_\e.
 \]
 Hence $\|w_\e\|_\e$ is bounded. Moreover, we have
 \begin{align*}
 o_\e(\e)\|u_\e-w_\e\|_\e &\geq \Gamma'(u_\e) (u_\e-w_\e)\\
 & =(u_\e, u_\e -w_\e) -\int_{\R^N} f(u_\e)(u_\e-w_\e)=\|u_\e-w_\e\|_\e^2.
 \end{align*}
 Therefore, 
 \begin{equation}\label{eq3.5}
  \|u_\e-w_\e\|_\e=o_\e(\e).
 \end{equation}
 Hence,
 \begin{equation}\label{eq3.3'}
  \int_{\R^N\setminus B(z_\e, \e^{-1/2})} |\nabla w_\e|^2 + |w_\e|^2 = o_\e(\e).
  \end{equation}
 By \eqref{eq2.11} and the elliptic estimates, we know that $w_\e\in H^2_{loc}(\R^N)$ and there is a constant $C>0$ such that
 \begin{equation}\label{eq3.6}
 \|\Delta w_\e\|_{L^2(z_\e,  3\e^{-\frac12})}\leq  \|V_\e w_\e\|_{L^2(z_\e,  3\e^{-\frac12})} +\|f(u_\e)\|_{L^2(z_\e,  3\e^{-\frac12})}\le C.
 \end{equation}
Taking $\psi_\e\in C_0^\infty(\R^N)$ such that $0\le\psi_\e\leq1$, $|\nabla \psi_\e|\leq 2\e^{\frac12}$ in $\R^N$, and 
$\psi_\e=1$ in $B(y_\e, 2\e^{-1/2})$, $\psi_\e=0$ in $\R^N\setminus B(y_\e, 3\e^{-1/2})$.
Multiplying \eqref{eq3.2} by $\frac{\partial (\psi_\e u_\e)}{\partial x_1}$ and integrating, we obtain 
\[
\int_{\R^N}(-\Delta w_\e)  \frac{\partial (\psi_\e u_\e)}{\partial x_1} +\int_{\R^N} V_\e w_\e \frac{\partial (\psi_\e u_\e)}{\partial x_1} = \int_{\R^N} f(u_\e) \frac{\partial (\psi_\e u_\e)}{\partial x_1}.
\]
By \eqref{eq3.3}, \eqref{eq3.3f}, \eqref{eq3.5}, \eqref{eq3.3'}, and \eqref{eq3.6}, we have
\begin{align*}
  &\int_{\R^N}(-\Delta w_\e)  \frac{\partial (\psi_\e u_\e)}{\partial x_1} =\int_{\R^N}(-\Delta w_\e)  \frac{\partial (\psi_\e w_\e)}{\partial x_1} +o_\e(\e)\\
  =&\int_{\R^N}\left\{\frac12 \frac{\partial(\psi_\e|\nabla  w_\e|^2)}{\partial x_1}+ \frac12 |\nabla  w_\e|^2 \frac{\partial \psi_\e}{\partial x_1} +  w_\e \nabla  w_\e \nabla \frac{\partial \psi_\e}{\partial x_1} +\nabla  w_\e \nabla \psi_\e \frac{\partial  w_\e}{\partial x_1}\right\}+o_\e(\e)\\
  =&\int_{\R^N\setminus B(y_\e, 2\e^{-1/2})}\left\{ \frac12 |\nabla  w_\e|^2 \frac{\partial \psi_\e}{\partial x_1} +  w_\e \nabla  w_\e \nabla \frac{\partial \psi_\e}{\partial x_1} +\nabla  w_\e \nabla \psi_\e \frac{\partial  w_\e}{\partial x_1}\right\}+o_\e(\e)=o_\e(\e),
\end{align*}
\begin{align*}
  \int_{\R^N} f(u_\e) \frac{\partial (\psi_\e u_\e)}{\partial x_1}=\int_{\R^N}\left\{  \frac{\partial(\psi_\e F(u_\e))}{\partial x_1}+
    \frac{\partial \psi_\e}{\partial x_1} [ f(u_\e)u_\e - F(u_\e)] 
    \right\}=o_\e(\e),
\end{align*}
and 
\begin{align*}
   \int_{\R^N} V_\e w_\e \frac{\partial (\psi_\e u_\e)}{\partial x_1}=&\int_{\R^N} V_\e w_\e \frac{\partial (\psi_\e w_\e)}{\partial x_1} +o_\e(\e)\\
  =&\frac12\int_{\R^N} \left\{
    \frac{\partial ( V_\e\psi_\e  w_\e^2)}{\partial x_1}+  
    {V}_\e \frac{\partial\psi_\e}{\partial x_1}  w_\e^2 -\frac{\partial {V}_\e}{\partial x_1}\psi_\e  w_\e^2\right\} +o_\e(\e)\\
    =&-\frac12\int_{\R^N}  \frac{\partial {V}_\e}{\partial x_1}\psi_\e  w_\e^2 +o_\e(\e).
\end{align*}
Then we obtain
\[
  -\frac12\int_{\R^N}  \frac{\partial {V}(\e x)}{\partial x_1}\psi_\e  w_\e^2 =o_\e(1).
\]
Taking limits as $\e\to 0$, we have 
\begin{align*}
   \frac{\partial V(z_0)}{\partial x_1} |U|_2^2
  = &
  \lim_{\varepsilon\to 0}  \int_{B(z_\e, 2\e^{-1/2})} \frac{\partial {V}(\e x)}{\partial x_1}    u_\e^2\\
  =&\lim_{\varepsilon\to 0}  \int_{B(z_\e, 2\e^{-1/2})}\frac{\partial {V}(\e x)}{\partial x_1}   w_\e^2
  \leq \lim_{\varepsilon\to 0}  \int_{\R^N}\frac{\partial {V}(\e x)}{\partial x_1}\psi_\e  w_\e^2 =0.
\end{align*}
This is a contradiction.
\end{proof}
\begin{remark}
Since 
\[Z_\e(\frac23\rho_0,2\delta_0 )\setminus Z_\e( \frac13\rho_0, \delta_0 )\subset (Z_\e(\rho_0,3\delta_0 )\setminus Z_\e( \rho_0, \delta_0 ))\cup 
(Z_\e(\rho_0, 3\delta_0)\setminus Z_\e(\frac13\rho_0, 3\delta_0)),\] we have
  \[
    \|\Gamma_\e'(u)\|_{H^{-1}_\e}\geq 2\min\set{\nu_1, \nu_2\e}\quad \mbox{provided that } u\in \left(Z_\e(\frac23\rho_0,2\delta_0 )\setminus Z_\e( \frac13\rho_0, \delta_0 )\right)\cap \Gamma_\e^{c_\e}.  
\]
\end{remark}
\section{Proof of the main theorem}
In this section we show the main theorem. First we determine
the sequence $c_\e\to E_{V_0}$. Let $U_0\in K_{V_0}$ be as in Lemma \ref{lemma2.3}.
Let $(p, s) \in   O^{ \delta_0} \times [-1,1]$.
For $\theta_1\in(0, 1/2)$, set 
\[
  \theta(s) :=\begin{cases}
    2(1-\theta_1) s + 2-\theta_1 & s\in [-1, -1/2],\\
    1,& s\in [-1/2, 1/2],\\
    2\theta_1 s+ 1-\theta_1, & s\in[1/2, 1].
  \end{cases}  
  \] 
Define
$$\gamma_{0\e}({p},{s}) (x):= \begin{cases} \displaystyle (\phi_\e U_0)( e^{-\theta_1 s}  (x-\frac{p}\e) ), & N\geq 3,\\
    \displaystyle\theta(s)  (\phi_\e U_0)( e^{-2\theta_1s}  (x-\frac{p}\e)  ), & N=2,
\end{cases}
  $$
  and 
  \[
    c_\e:=\max_{(p,s)\in   O^{ \delta_0} \times [-1,1]}\Gamma_\e (\gamma_{0\e}({p},{s})).
  \]
We have 
\begin{lemma}\label{lem4.1}
  There are $\theta_1, \nu_3, \e_3>0$ such that the following statements hold for $\e\in(0, \e_3)$.
  \begin{description}
    \item[(i)]  $\gamma_{0\e}({p},{s}) \in Z_{\e}(\frac13 \rho_0, \delta_0)$ for each $(p, s) \in   O^{ \delta_0} \times [-1,1]$.
    \item[(ii)] There is $R_1>4R_0$ such that $|p-\e\Upsilon(\gamma_{0\e}({p},{s}))|\leq R_1\e$ for each $(p, s) \in   O^{ \delta_0} \times [-1,1]$. 
    \item[(iii)] $P_{V_0}(\gamma_{0\e}(0,{s}))=P_{V_0}(\gamma_{0\e}({p},{s}))$,   $P_{V_0}(\gamma_{0\e}({p},{-1}))>0$, $P_{V_0}(\gamma_{0\e}({p},{1}))<0$ for each $p\in   O^{ \delta_0}$, $s\in[-1,1]$.
    \item[(iv)] $\displaystyle \max_{(p,s)\in \partial (O^{ \delta_0} \times [-1,1])}\Gamma_\e (\gamma_{0\e}({p},{s})) < E_{V_0}-2\nu_3$.
    \item[(v)]   $c_\e\to E_{V_0} $  as $\e\to 0$.
  \end{description} 
\end{lemma}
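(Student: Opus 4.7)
All five items reduce to limiting statements at $\e = 0$, by virtue of the convergence $\phi_\e U_0 \to U_0$ in $H^1(\R^N)$ and the continuity in $H^1$ of the Pohozaev and energy functionals. The plan is to fix $\theta_1 > 0$ at the outset small enough that the orbit $\{U_0(e^{-\theta_1 s}\cdot)\}_{|s|\le 1}$ remains in a small prescribed $H^1$-neighbourhood of $U_0$; for $N=2$ we further impose $2\theta_1 \le \theta_0$ and $1+\theta_1 \le s_0$ so that the scalar multiplier $\theta(s)\in[\theta_1, 1+\theta_1]$ lands in the range of Lemma \ref{lemma2.3}(ii).

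\textbf{Items (ii), (iii), (i).} Translation invariance of $P_{V_0}$ gives $P_{V_0}(\gamma_{0\e}(p,s)) = P_{V_0}(\gamma_{0\e}(0,s))$. The signs at $s = \pm 1$ follow by passing to the limit $\e \to 0$ and applying Lemma \ref{lemma2.3}(i) with $\theta = \pm\theta_1$ for $N \ge 3$, or Lemma \ref{lemma2.3}(ii) at the points $\theta_1 U_0(e^{2\theta_1}\cdot)$ and $(1+\theta_1)U_0(e^{-2\theta_1}\cdot)$ for $N = 2$. Item (ii) is Lemma \ref{lem3.4}(i) once one verifies that $\gamma_{0\e}(p,s)$ sits within $H_\e$-distance $\rho_1/16$ of $(\phi_\e U_0)(\cdot - p/\e) \in S_\e(\Omega)$; the same estimate sharpened to tolerance $\rho_0/4$ gives (i), by direct manipulation with the rescaling operator and the scalar factor $\theta(s)$, using smallness of $\theta_1$ together with the exponential decay of $U_0$. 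Combined with the barycenter bound $|\e \Upsilon(\gamma_{0\e}(p,s)) - p| \le 2R_0\e$ and the hypothesis $p \in O^{\delta_0}$, this places $\gamma_{0\e}(p,s)$ in $Z_\e(\tfrac13\rho_0, \delta_0)$ for all sufficiently small $\e$.

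\textbf{Items (iv) and (v).} The key expansion, uniform in $(p,s)$, is
\[
\Gamma_\e(\gamma_{0\e}(p,s)) = L_{V(p)}\bigl(U_0(e^{-\theta_1 s}\cdot)\bigr) + o_\e(1),
\]
with $\Phi_\e$ vanishing for small $\e$ thanks to the exponential decay of $U_0$. For $p \in \partial O^{\delta_0}$, the inclusion $\mathcal{V} \subset O$ together with continuity of $V$ on the compact set $\overline{O^{5\delta_0}\setminus O}$ yields $\eta > 0$ with $V \le V_0 - \eta$ on $\partial O^{\delta_0}$, whence $L_{V(p)}(U_0(e^{-\theta_1 s}\cdot)) \le E_{V_0} - C\eta$ for some $C > 0$. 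For $s = \pm 1$, Lemma \ref{lemma2.3} delivers the strict gap $L_{V_0}(U_0(e^{\mp\theta_1}\cdot)) < E_{V_0}$ (and its $N=2$ analogue); combining the two cases proves (iv) with an appropriate $\nu_3$. For (v), the same expansion gives $c_\e \le E_{V_0} + o_\e(1)$ since $L_{V(p)} \le L_{V_0}$ and $L_{V_0}(U_0(e^{-\theta_1 s}\cdot)) \le L_{V_0}(U_0) = E_{V_0}$, while the matching lower bound follows from $\Gamma_\e(\gamma_{0\e}(0,0)) \to L_{V_0}(U_0) = E_{V_0}$, using $0 \in \mathcal{V}$.

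\textbf{Main obstacle.} The chief subtlety is not any one estimate but the coordination of $\theta_1$ across all five items, particularly in dimension $N = 2$ where the path simultaneously involves a rescaling $e^{-2\theta_1 s}$ and a scalar multiplier $\theta(s)$; both the Pohozaev signs in (iii) and the strict energy drop in (iv) must be extracted from Lemma \ref{lemma2.3}(ii), whose parameters $\theta_0, s_0$ constrain $\theta_1$ from above.
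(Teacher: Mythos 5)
Your proposal follows the same route as the paper's (extremely terse) proof — "by Lemma \ref{lemma2.3}, fix $\theta_1, \nu_3, \e_3$ small enough" — and fills in essentially the right details: convergence $\phi_\e U_0 \to U_0$ in $H^1$, continuity of $P_{V_0}$ and $L_m$, compactness of $\partial O^{\delta_0}$ to get the uniform drop $V\le V_0-\eta$ on the spatial boundary, and Taylor-type smallness to coordinate $\theta_1$ across the five items.

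One step as written does not literally go through in the $N=2$ case. Lemma \ref{lemma2.3}(ii) is stated for the \emph{fixed} dilation $e^{-\theta_0}$ only, while the path uses the $s$-dependent dilation $e^{-2\theta_1 s}$; in particular the $s=-1$ endpoint $\theta_1 U_0(e^{2\theta_1}\cdot)$ corresponds to $\theta=-2\theta_1<0$, which is not covered by the lemma's statement, and imposing $2\theta_1\le\theta_0$ does not fix this sign mismatch. The missing observation is that for $N=2$ the Pohozaev functional scales as $P_{V_0}\bigl(aU_0(e^{-\theta}\cdot)\bigr)=e^{2\theta}P_{V_0}(aU_0)$, so the \emph{sign} of $P_{V_0}$ depends only on the scalar $a$ and not on $\theta$; with this noted, Lemma \ref{lemma2.3}(ii) (read at $\theta=\theta_0$) pins down the sign of $P_{V_0}(aU_0)$ for $a\in[0,1)\cup(1,s_0]$ and hence of $P_{V_0}(\gamma_{0\e}(p,\pm 1))$ in the limit. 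A parallel (but less automatic) verification is needed for the strict energy gap in (iv) at $s=\pm 1$ when $N=2$: since the dilation varies along the path, you cannot directly quote the monotonicity of $s\mapsto L_{V_0}\bigl(sU_0(e^{-\theta_0}\cdot)\bigr)$ from the lemma, but a Taylor expansion in $\theta_1$ — using $\tfrac{\rd}{\rd a}\big|_{a=1} L_{V_0}\bigl(aU_0(e^{-\theta}\cdot)\bigr)=(1-e^{2\theta})\|\nabla U_0\|_2^2$ — shows $L_{V_0}\bigl(\theta(s)U_0(e^{-2\theta_1 s}\cdot)\bigr)\le E_{V_0}$ for all $s\in[-1,1]$ with strict inequality at $s=\pm 1$, once $\theta_1$ is small. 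Beyond this, your argument is sound and supplies the content the paper's proof leaves implicit.
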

\begin{proof}
  By Lemma \ref{lemma2.3}, it suffices to fix $\theta_1, \nu_3, \e_3>0$ sufficiently small to get the conclusion.
\end{proof}
\begin{proposition}\label{pro4.2}
  There is $\e_4\in (0, \min\set{\e_1, \e_2, \e_3})$ such that for $\e\in (0, \e_4)$, there is a sequence 
  $\{u_n\}\subset Z_\e(\rho_0, 3\delta_0)\cap \Gamma_\e^{c_\e}$ such that 
  \[
 \| \Gamma_\e'(u_n)\|_{H^{-1}_\e} \to 0, \quad \mbox{as } n\to +\infty.  
  \] 
\end{proposition}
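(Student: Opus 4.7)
The plan is to produce the Palais--Smale sequence by a deformation argument that uses the linking structure of $c_\e$. Arguing by contradiction, suppose no such sequence exists; then for small $\e$ there is $\eta>0$ independent of $\e$ with $\|\Gamma_\e'(u)\|_{H_\e^{-1}}\ge 2\eta$ on $Z_\e(\rho_0,3\delta_0)\cap\Gamma_\e^{-1}([c_\e-2\eta,c_\e])$. Combined with Corollary \ref{cor3.2} and Proposition \ref{gradient estimate 2} (through the subsequent Remark), this sharpens to $\|\Gamma_\e'\|_{H_\e^{-1}}\ge 2\sigma_\e$, where $\sigma_\e:=\min\{\eta,\nu_1,\nu_2\e\}$, on the annulus $A_\e:=(Z_\e(\tfrac23\rho_0,2\delta_0)\setminus Z_\e(\tfrac13\rho_0,\delta_0))\cap\Gamma_\e^{-1}([c_\e-2\eta,c_\e])$. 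Further shrinking $\eta<\nu_3$ places $\gamma_{0\e}|_\partial$ below $c_\e-2\eta$ by Lemma \ref{lem4.1}(iv).

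Build a locally Lipschitz pseudo-gradient vector field $W$ on $Z_\e(\rho_0,3\delta_0)\cap\Gamma_\e^{-1}([c_\e-2\eta,c_\e+2\eta])$ with $\|W\|_\e\le 1$ and $\langle\Gamma_\e'(u),W(u)\rangle\ge\sigma_\e/2$, and multiply by Lipschitz cutoffs vanishing off this energy strip and off $Z_\e(\tfrac23\rho_0,2\delta_0)$. The resulting negative-gradient flow $\xi_t$ leaves $Z_\e(\rho_0,3\delta_0)$ invariant, fixes every $u$ with $\Gamma_\e(u)\le c_\e-2\eta$ (hence fixes $\gamma_{0\e}|_\partial$), and drags $\Gamma_\e$ values from $[c_\e-\eta,c_\e]$ below $c_\e-\eta$ in time $T_\e=4\eta/\sigma_\e=O(\e^{-1})$. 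Then $\gamma_{1\e}:=\xi_{T_\e}\circ\gamma_{0\e}$ is a continuous map $O^{\delta_0}\times[-1,1]\to Z_\e(\rho_0,3\delta_0)$, agreeing with $\gamma_{0\e}$ on the boundary, satisfying $\max_{(p,s)}\Gamma_\e(\gamma_{1\e}(p,s))\le c_\e-\eta$.

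Derive a contradiction via the following linking property: for every continuous $\gamma\colon O^{\delta_0}\times[-1,1]\to Z_\e(\rho_0,3\delta_0)$ with $\gamma|_\partial=\gamma_{0\e}|_\partial$, there is $(p^\ast,s^\ast)$ in the interior with $\e\Upsilon(\gamma(p^\ast,s^\ast))=0$ and $P_{V_0}(\gamma(p^\ast,s^\ast))=0$. Apply Brouwer degree to $F(p,s):=(\e\Upsilon(\gamma(p,s)),P_{V_0}(\gamma(p,s)))$ near $(0,0)$: for $\gamma_{0\e}$, Lemma \ref{lem4.1}(ii)--(iii) give $F(p,s)=(p+O(\e),P_{V_0}(\gamma_{0\e}(p,s)))$ with $P_{V_0}>0$ at $s=-1$ and $<0$ at $s=1$, so $\deg(F,O^{\delta_0}\times[-1,1],(0,0))\ne 0$, and boundary-preserving homotopy transfers the nonvanishing degree to $\gamma_{1\e}$. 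At $u:=\gamma_{1\e}(p^\ast,s^\ast)$ we have $\|u-(\phi_\e U)(\cdot-y)\|_\e<\rho_0$ for some $U\in S_0$ and $\e y\to 0\in\mathcal V$; continuity of $V$ at $0$ together with the uniform exponential decay of $S_0$ from Lemma \ref{compact} force $\int(V_0-V(\e x))u^2=o_\e(1)$, so
\[
  \Gamma_\e(u)\ge L_{V_0}(u)-o_\e(1)\ge E_{V_0}-o_\e(1),
\]
using the Pohozaev characterization $E_{V_0}=\inf\{L_{V_0}(w):P_{V_0}(w)=0,\ w\ne 0\}$ from Section 2.1. Together with $c_\e\to E_{V_0}$ (Lemma \ref{lem4.1}(v)), this forces $\eta\le o_\e(1)$, a contradiction.

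The principal obstacle is confining the flow to $Z_\e(\rho_0,3\delta_0)$ throughout the long time $T_\e=O(\e^{-1})$: the $H_\e$-displacement of trajectories can in principle reach $O(\e^{-1})$, far larger than the width $\rho_0$ of the strip, so the cutoff construction of $W$ must exploit both the uniform outer bound $\nu_1$ from Corollary \ref{cor3.2} (securing inward/tangential behavior near $\partial Z_\e$) and the $\e$-scale bound $\nu_2\e$ from Proposition \ref{gradient estimate 2}. A further delicate point is the linking energy estimate $\Gamma_\e(u)\ge E_{V_0}-o_\e(1)$: $u$ is only controlled as close in $H_\e$ to the concentrated family $S_\e(\Omega)$, not as a near-critical point, so the closure of the argument rests on the Pohozaev constraint $P_{V_0}(u)=0$ combined with the mountain-pass characterization of $E_{V_0}$ and the decay in Lemma \ref{compact}.
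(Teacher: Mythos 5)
Your argument has the right skeleton---deformation flow, Brouwer degree on $(\e\Upsilon,P_{V_0})$, Pohozaev lower bound---but three steps are genuine gaps, and the third is the core issue the paper is specifically engineered to solve.

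\textbf{1. Incorrect negation.} You write ``suppose no such sequence exists; then for small $\e$ there is $\eta>0$ independent of $\e$ with $\|\Gamma_\e'\|\ge 2\eta$ on $Z_\e(\rho_0,3\delta_0)\cap\Gamma_\e^{c_\e}$.'' The negation of the proposition only yields a sequence $\e_k\to 0$ along which $\inf\|\Gamma_{\e_k}'\|=:2\nu_{\e_k}>0$, with $\nu_{\e_k}$ allowed to tend to $0$. Your whole contradiction (``forces $\eta\le o_\e(1)$'') collapses if $\eta$ is actually $\e$-dependent, because then $\eta_\e\to 0$ is no contradiction at all. The paper works with the $\e$-dependent $\nu_\e$ and takes the flow time $t_*=\nu_0/\nu_\e$, which adapts to $\nu_\e$.

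\textbf{2. The linking energy lower bound.} You derive $\int(V_0-V_\e)u^2=o_\e(1)$ from $\e y\to 0$, continuity of $V$, and the exponential decay of $S_0$. This is not enough: the point $u$ is only $\rho_0$-close to $(\phi_\e U)(\cdot-y)$ in $H_\e$, and the discrepancy, though of $H_\e$-norm $<\rho_0$, has no spatial decay control. Splitting $u=(\phi_\e U)(\cdot-y)+v$, the cross and quadratic terms in $v$ contribute $O(\rho_0)$ and $O(\rho_0^2)$, not $o_\e(1)$; $\rho_0$ is a fixed constant, so this ruins the estimate $\Gamma_\e(u)\ge E_{V_0}-o_\e(1)$. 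The paper instead invokes Lemma~\ref{lem3.12}: because $\Gamma_\e(u_\e)\le c_\e$ is bounded, the penalization $\Phi_\e$ forces $\int_{\R^N\setminus B(0,2\e^{-1/2})}u_\e^2\le C\e^{1/2}$, and it is \emph{this} tail bound (together with $\Upsilon(u_\e)=0$ from the degree argument) that yields $\int(V_\e-V_0)u_\e^2=o_\e(1)$ on the ball where $V(\e x)\to V_0$ and outside it by smallness of mass.

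\textbf{3. Flow confinement versus case analysis.} You cut off the vector field to vanish outside $Z_\e(\frac23\rho_0,2\delta_0)$, which does trap trajectories in $Z_\e(\rho_0,3\delta_0)$ --- but then any trajectory that enters the transition annulus $Z_\e(\rho_0,3\delta_0)\setminus Z_\e(\frac23\rho_0,2\delta_0)$ simply freezes there with energy still near $c_\e$, so the claim that $\max\Gamma_\e(\gamma_{1\e})\le c_\e-\eta$ fails. You acknowledge this as ``the principal obstacle'' but do not resolve it. The paper does not try to confine the flow at all: the vector field is nonzero throughout $Z_\e(\rho_0,3\delta_0)$ (above the energy floor), and the key step is a trichotomy on the trajectory through time $t_*=\nu_0/\nu_\e$: (Case 1) it never leaves $Z_\e(\frac23\rho_0,2\delta_0)$, so the energy drops by $t_*\nu_\e=\nu_0$; (Case 2.1) it traverses the $H_\e$-annulus, which costs time $\ge\frac13\rho_0$ in a region where $\|\Gamma_\e'\|\ge 2\nu_0$, giving an energy drop $\ge\frac13\rho_0\nu_0$; (Case 2.2) the barycenter moves by $\ge\frac13\rho_0\e^{-1}$, which by Lemma~\ref{lem3.4}(ii) costs time $\ge\frac13\rho_0\e^{-1}D_1^{-1}$ in a region where $\|\Gamma_\e'\|\ge 2\nu_0\e$, again giving a fixed drop $\ge\frac13\rho_0\nu_0 D_1^{-1}$. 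In every case the energy at the linking point ends up a \emph{fixed} amount below $c_\e\to E_{V_0}$, contradicting $\Gamma_\e(u_\e)\ge E_{V_0}+o_\e(1)$. This trichotomy is precisely what makes the argument close without needing either an $\e$-independent gradient bound or flow confinement, and it is where the $\e$-scaled gradient estimate $\nu_2\e$ of Proposition~\ref{gradient estimate 2} earns its keep.
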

\begin{proof}
Assume by contradiction, for a sequence  of $\e\to 0$, there is $\nu_\e>0$, such that
\[
\|\Gamma_\e'(u)\|_{H_\e^{-1}}\geq 2\nu_\e \mbox{ provided that } u\in    Z_\e(\rho_0, 3\delta_0)\cap \Gamma_\e^{c_\e}.
\] 
Together with Corollary \ref{cor3.2} and Proposition \ref{gradient estimate 2},
 we can find a locally lipschitzian pseudo gradient vector field $\mathcal{W}_\e : H_\e \to H_\e$ such that 
\begin{description}
  \item[(i)]  $\|\mathcal{W}_\e(u)\|_\e \leq 1$ and $\Gamma_\e'(u) \mathcal{W}_\e(u) \leq 0$ for $u\in Z_\e(\rho_0, 3\delta_0)$;
  \item[(ii)] $\mathcal{W}_\e(u)=0$ if $u\in H_\e\setminus Z_\e(\rho_0, 3\delta_0)$ or $u\in \Gamma_\e^{E_{V_0}-2\nu_3}$.
  \item[(ii)]  $\Gamma_\e'(u) \mathcal{W}_\e(u) \leq -\nu_\e$ for 
  $u\in Z_\e(\rho_0, 3\delta_0)\cap \Gamma_{\e, E_{V_0}-\nu_0}^{c_\e}$, where $\nu_0=\min\set{\nu_1, \nu_2, \nu_3}$ and
   \[
    \Gamma_{\e, E_{V_0}-\nu_0}^{c_\e} :=\set{u\in \Gamma_\e^{c_\e} |\Gamma_\e(u)\ge E_{V_0}-\nu_0}.\]  
  \item[(iii)] $\Gamma_\e'(u) \mathcal{W}_\e(u) \leq -\nu_0$ if $u\in (Z_\e(\frac23\rho_0, 2\delta_0)\setminus 
  Z_\e( \frac{1}{3}\rho_0, 2\delta_0))
  \cap \Gamma_{\e, E_{V_0}-\nu_0}^{c_\e}$.
  \item[(iv)] $\Gamma_\e'(u) \mathcal{W}_\e(u)\leq -\nu_0\e$ if $u\in (Z_\e(\frac23\rho_0, 2\delta_0)\setminus 
  Z_\e(\frac{1}{3}\rho_0, \delta_0))
  \cap \Gamma_{\e, E_{V_0}-\nu_0}^{c_\e}$.
\end{description}
We define 
\[
\begin{cases}\displaystyle
  \frac{\rd \eta(t, u)}{\rd t} =\mathcal V_\e(\eta(t,u))\\
  \eta(0, u)=u.
\end{cases}  
\]
For $t_* = \frac{\nu_0}{\nu_\e}$,
we consider $\gamma_\e(p,s)= \eta(t_*, \gamma_{0\e}(p,s))$, $(p, s) \in O^{\delta_0}\times [-1,1]$.
By Lemma \ref{lem4.1} (iv) and property (ii) of $\mathcal W_\e$,
 $\gamma_\e(p,s)= \gamma_{0\e}(p,s)$ for $(p, s)\in \partial (O^{\delta_0}\times [-1,1])$ if $\e\in(0, \e_3)$.
Consider the map $\mathcal F_\e: O^{\delta_0}\times [-1,1] \to \R^N\times \R$ defined by 
\[
\mathcal F_\e(p,s)= (\e \Upsilon(\gamma_\e(p,s)), P_{V_0}(\gamma_{\e}(p,s))).
\]
By Lemma \ref{lem4.1} (ii),
we have
$\e \Upsilon(\gamma_{0\e}(p,s))\to p$,
uniformly for $(p,s)\in O^{\delta_0}\times[-1,1]$ as 
$\e\to 0$. Then 
\begin{align*}
&\deg(\mathcal F_\e(p, s), O^{\delta_0}\times [-1,1], (0,   0))\\
=&  
\deg((\e \Upsilon(\gamma_{0\e}(p,s)), P_{V_0}(\gamma_{0\e}(p,s)) ), O^{\delta_0}\times [-1,1], (0,0))\\
=&\deg((id, P_{V_0}(\gamma_{0\e}(0,s)) ), O^{\delta_0}\times [-1,1], (0,0))\\
=&\deg((P_{V_0}(\gamma_{0\e}(0,s)), [-1,1], 0)=-1.
\end{align*} 
Then we conclude that 
there is $u_\e\in \gamma_\e(O^{\delta_0}\times [-1,1])$ such that $\e\Upsilon(u_\e) =0\in \mathcal V$ and $P_{V_0}(u_\e)=0$. 
Let $v_\e \in \gamma_{0\e}(O^{\delta_0}\times[-1,1])\subset Z_\e( \frac13 \rho_0, \delta_0)\cap \Gamma_\e^{c_\e}$ be  such that $u_\e = \eta(t_*, v_\e)$.
By property (i) of $\mathcal W_\e$, $\Gamma_\e(\eta(t, v_\e))$ is decreasing in $t$.
Hence, $\Gamma_\e(u_\e)\leq \Gamma_\e(v_\e) \le c_\e$.
By Lemma \ref{lem3.12}, we have 
\[
  \int_{\R^N\setminus B(0, 2 \e^{-\frac12})}  u_\e^2\le\int_{\R^N} \chi_{\e, u_\e} u_\e^2 \leq C\e^{\frac12}.  
\]
Then 
\begin{equation}\label{eq4.1}
  \begin{aligned}
    \Gamma_\e(u_\e) &\geq \frac12 \int_{\R^N} |\nabla u_\e|^2 +V_\e u_\e^2 -\int_{\R^N} F(u_\e)-\frac1N P_{V_0}(u_\e)\\
    &=\frac{1}N\int_{\R^N} |\nabla u_\e|^2 +\frac12\int_{\R^N} (V_\e-V_0) u_\e^2\\
    &\geq \frac{1}N\int_{\R^N} |\nabla u_\e|^2 -\frac12\int_{\R^N\setminus B(0, 2\e^{-\frac12})} V_0 u_\e^2+o_\e(1)\\
    &=L_{V_0}(u_\e) -\frac1N P_{V_0}(u_\e) +o_\e(1)\geq E_{V_0} +o_\e(1).
  \end{aligned}
\end{equation}
Hence for $\e$ small, $\eta([0,t_*], v_\e)\subset \Gamma_{\e, E_{V_0}-\nu_0}^{c_\e}$.
We will get contradictions in the following cases.

\noindent
{\bf Case 1.} $\eta([0,t_*], v_\e)\subset Z_\e(\frac23\rho_0, 2\delta_0)$. 
In this case, we use property (ii) of $\mathcal W_\e$ to get 
\begin{align*}
  \Gamma_\e(u_\e)=& \Gamma_\e(v_\e) +\int_{0}^{t_*} \frac{\rd}{\rd t} \Gamma_\e(\eta(t, v_\e)) \rd t\\
\le &c_\e +\int_0^{t_*} \Gamma_\e'(\eta(t, v_\e)) \mathcal W_\e(\eta(t, v_\e)) \rd t\\
\le &c_\e - t_*\nu_\e \leq c_\e -\nu_0.
\end{align*}

\noindent
{\bf Case 2.} $\eta(t, v_\e)\notin Z_\e( \frac23\rho_0, 2\delta_0)$ for some $t\in[0,t_*]$. There are two subcases.

\noindent
{\bf Subcase 2.1.} There are $t_1, t_2\in [0, t_*]$
with $t_1<t_2$ such that   $\eta([t_1, t_2], v_\e)\subset Z_\e(\frac23\rho_0, 2\delta_0)\setminus 
Z_\e( \frac{1}{3}\rho_0, 2\delta_0)$ and
$\|\eta(t_1, v_\e)-\eta(t_2, v_\e)\|\geq \frac13\rho_0$. By property (i) of $\mathcal W_\e$, we have 
$t_2-t_1\geq \frac13\rho_0$. By property (iii) of $\mathcal W_\e$, we have 
\begin{align*}
  \Gamma_\e(u_\e)\le & \Gamma_\e(v_\e) +\int_{t_1}^{t_2} \frac{\rd}{\rd t} \Gamma_\e(\eta(t, v_\e)) \rd t\\
\le &c_\e +\int_{t_1}^{t_2} \Gamma_\e'(\eta(t, v_\e)) \mathcal W_\e(\eta(t, v_\e)) \rd t\\
\le &c_\e - (t_2 -t_1)\nu_0 \leq c_\e -\frac13\rho_0\nu_0.
\end{align*}

\noindent
{\bf Subcase 2.2.} There are $t_1, t_2\in [0, t_*]$
with $t_1<t_2$ such that   $\eta([t_1, t_2], v_\e)\subset Z_\e(\frac23\rho_0, 2\delta_0)\setminus 
Z_\e( \frac{1}{3}\rho_0,  \delta_0)$ and
$|\Upsilon(\eta(t_1, v_\e))-\Upsilon(\eta(t_2, v_\e))|\geq \frac13\rho_0\e^{-1}$. Then by Lemma \ref{lem3.4} (ii)
and  property (i) of $\mathcal W_\e$,
\[
|t_1-t_2|\geq \frac{1}{3}\rho_0 \e^{-1} D_1^{-1}.  
\]
By property (iv) of $\mathcal W_\e$, we have 
\begin{align*}
  \Gamma_\e(u_\e)\le & \Gamma_\e(v_\e) +\int_{t_1}^{t_2} \frac{\rd}{\rd t} \Gamma_\e(\eta(t, v_\e)) \rd t\\
\le &c_\e +\int_{t_1}^{t_2} \Gamma_\e'(\eta(t, v_\e)) \mathcal W_\e(\eta(t, v_\e)) \rd t\\
\le &c_\e - (t_2 -t_1)\nu_0 \e \leq c_\e -\frac13\rho_0\nu_0D_1^{-1}.
\end{align*}
In either case, taking limits  as $\e \to 0$, we get a contradiction to \eqref{eq4.1}.
\end{proof}

The existence of a critical point follows from the following compactness result.
\begin{proposition}\label{pro4.3}
  There is $\e_5\in (0, \e_3)$ such that for each $\e\in (0, \e_5)$, the (PS) sequence $\{u_n\}$ given in Proposition \ref{pro4.2} has a strong convergent subsequence.
\end{proposition}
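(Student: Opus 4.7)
The plan is to leverage Corollary \ref{cor2.11} to eliminate the penalization and then run a standard concentration/tail argument based on Proposition \ref{lemdecay}. Since $u_n \in Z_\e(\rho_0, 3\delta_0)$, the sequence is bounded in $H_\e$ by $D_2$, and $b_n := \|\Gamma_\e'(u_n)\|_{H_\e^{-1}} \to 0$ at the fixed small $\e$ under consideration. First I would re-run the proof of Corollary \ref{cor2.11} with $\e$ fixed small and $n\to\infty$: the tail $\int_{\R^N \setminus B(\Upsilon(u_n),\e^{-1/2})} u_n^2$ provided by Proposition \ref{lemdecay} is $O(b_n^2 + e^{-c\e^{-1/2}})$, so $\e^{-1/2}$ times it is less than $1$ once $\e$ is small and $n$ is large. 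This forces $\Phi_\e(u_n)=0$ and $\Phi_\e'(u_n)=0$ for all large $n$, whence $\Gamma_\e'(u_n)=J_\e'(u_n)\to 0$ in $H_\e^{-1}$, where $J_\e(u)=\tfrac12\|u\|_\e^2 - \int_{\R^N} F(u)$ is the unpenalized functional.

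Next I would use that for fixed $\e$ the barycenters $\Upsilon(u_n)\in \frac{1}{\e}\overline{O^{3\delta_0}}$ are bounded, so after extracting a subsequence $\Upsilon(u_n)\to y^*\in\R^N$. Proposition \ref{lemdecay} then upgrades to uniform exponential tail control at the fixed center:
\[
\limsup_{n\to\infty}\int_{\R^N \setminus B(y^*, R)} (|\nabla u_n|^2 + u_n^2) \leq C e^{-cR}
\quad\text{for every}\ R<\e^{-1/2}-1.
\]

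Passing to a further subsequence, $u_n \rightharpoonup u^*$ weakly in $H_\e$ and pointwise a.e. Rellich compactness gives strong convergence in $L^q_{loc}(\R^N)$ for $q\in[2,2^*)$, which combined with the uniform $L^2$ tail and the $L^{2^*}$ boundedness (via interpolation) yields strong convergence in $L^q(\R^N)$ for the same range. I would then test the identity
\[
o(1)=J_\e'(u_n)(u_n-u^*)= (u_n, u_n-u^*)_\e - \int_{\R^N} f(u_n)(u_n-u^*).
\]
The nonlinear integral vanishes in the limit by the subcritical growth bound $|f(t)|\leq \alpha_0 t + C|t|^{p-1}$ from (F1)-(F2) together with the strong $L^q$ convergence, while weak convergence gives $(u_n,u^*)_\e\to\|u^*\|_\e^2$. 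Hence $\|u_n\|_\e^2\to\|u^*\|_\e^2$, and weak convergence together with norm convergence in the Hilbert space $H_\e$ yields $u_n\to u^*$ strongly.

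The main obstacle will be the first step: Corollary \ref{cor2.11} is phrased for sequences indexed by $\e\to 0$, while here $\e$ is fixed and $n\to\infty$. Reproducing its proof in this regime is what pins down the threshold $\e_5$, because we need $\e$ small enough that $\e^{-1/2}e^{-c\e^{-1/2}}$ is already much smaller than $1$ before the $b_n^2$ contribution is allowed to make up the remainder. Once both the vanishing of $\Phi_\e$ and the tail localization at $y^*$ are in hand, the rest is routine Hilbert-space compactness.
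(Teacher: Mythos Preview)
Your argument is correct and takes a genuinely different route from the paper. You first invoke Proposition~\ref{lemdecay} and the mechanism behind Corollary~\ref{cor2.11} to kill the penalty $\Phi_\e$ for large $n$, and then use the resulting exponential tail estimate together with Rellich and interpolation to get strong $L^q(\R^N)$ convergence directly; from there the Hilbert-space identity $J_\e'(u_n)(u_n-u^*)=o(1)$ closes the loop. The paper instead never eliminates $\Phi_\e$: it uses only the crude bound $\int_{\R^N\setminus B(\Upsilon(u_n),2\e^{-1/2})}u_n^2\le C\e^{1/2}$ from Lemma~\ref{lem3.12} and argues $L^p$ convergence ($p\in(2,2^*)$) by contradiction via Lions' lemma --- an escaping profile $v\ne 0$ would satisfy $\|v\|^2\le\int f(v)v$ (here only the \emph{sign} $\Phi_\e'(u_n)(\xi_{n,R}u_n)\ge 0$ is used, not its vanishing), hence $|v|_2\ge\rho_2$ by Lemma~\ref{lemma26}, contradicting $|v|_2^2\le C\e^{1/2}$ for $\e$ small. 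Your approach is more direct once Proposition~\ref{lemdecay} is in hand; the paper's recycles the same Lions--lemma pattern already used in the proof of Proposition~\ref{concompact} and avoids reopening the decay iteration.

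One small point to tidy: you carry the restriction $R<\e^{-1/2}-1$ from the statement of Proposition~\ref{lemdecay}, but for \emph{fixed} $\e$ that caps the tail at $Ce^{-c\e^{-1/2}}$ rather than letting it vanish, so the step ``uniform $L^2$ tail $\Rightarrow$ strong $L^q(\R^N)$'' is not literally justified as written. The fix is immediate: once you have established $\Phi_\e'(u_n)=0$ for large $n$, the iteration in the proof of Proposition~\ref{lemdecay} runs verbatim for every $R>R_1$ (the restriction $R\le\e^{-1/2}$ is never used in that proof --- the only place $\Phi_\e$ enters is through the inequality $\Phi_\e'(u)(\psi_r u)\ge 0$, valid for all $r>4R_0$). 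This gives the full exponential decay uniformly in $n$ and closes the gap.
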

The proof of Proposition \ref{pro4.3} is standard and will be given in Appendix. Now we complete the proof of Theorem \ref{th1.1}.
\begin{proof}[\bf Completion of proof of Theorem \ref{th1.1}]
By (V2) and (V3),  we  choose a sequence of open sets $\set{O_k}_{k=0}^\infty$ with $O_{k+1}\subset O_{k}\subset O$, $\cap_{k=0}^\infty O_k=\mathcal V$, and each $O_k$ satisfying 
\eqref{equ2.2} and \eqref{equ2.3}. 
By Proposition \ref{pro4.2} and Proposition \ref{pro4.3}, there are decreasing $\e_k\to 0$, $\delta_k\to 0$ such that $\Gamma_\e$ has  a critical point 
$u_{k,\e}$
in 
$Z_\e(\rho_0, 3\delta_j)\cap \Gamma_\e^{E_{V_0}+{c_{k,\e}}}$ for $\e\in (0, \e_j)$,
where $c_{k,\e}$ is decreasing with respect to $k$ and $c_{k,\e}\to E_{V_0}$ as $\e\to0$.
Define 
\[ u_\e = u_{k,\e} \ \mbox{ for } \e\in [\e_{k+1}, \e_k).
  \]
  Then for   $\e\in (0, \e_0)$ we have found a critical point $u_\e$ of $\Gamma_\e$.
  By Proposition \ref{lemdecay}, we know that $u_\e$ is in fact a solution of the original problem \eqref{eq1.2}.
  Since for any sequence $\e \to 0$, $u_\e$ satisfies the assumptions of Proposition \ref{concompact}, 
  we know that, up to a subsequence,
  there exist
$U\in S_0$ and 
$ z_{\e} \in \frac{1}{\varepsilon} \Omega$ such that as $\e\to 0$  
\begin{equation*}
  \|u_{\e}-   (\phi_{\e }U)  (\cdot-z_{\e} )\|_{ {\varepsilon }}\to 0.
\end{equation*}
Since $\dist (\e\Upsilon (u_\e), \mathcal V)\to 0$ and $|\Upsilon(u_\e)-z_\e|\leq 2R_0$, we have 
$\dist(\e z_\e, \mathcal V)\to 0$.
\end{proof}
\section{Appendix}
\begin{proof}[{\bf Proof of Proposition \ref{concompact}}]
  Let  $\varepsilon_n, u_n$
   satisfy 
 \eqref{13}. 
 By the compactness of $S_0$, we can write
 \begin{equation}\label{equa 4.1}
 u_n(x)= (\phi_{\e_n}\tilde U)   (x-y_n )+w_n(x),\quad \|w_n\|_{ \varepsilon_n}\leq \rho_0,\quad \varepsilon_n\Upsilon (u_n)\in O^{3\delta_0}, 
 \end{equation}
  where $y_n\in \frac{1}{\e_n}\Omega$, $\tilde U\in S_0$. By Lemma \ref{lem3.4} (i), $|y_n-\Upsilon(u_n)|\leq 2R_0$ and
   $\dist(\varepsilon_n y_n, O^{3\delta_0}) \leq 2R_0\varepsilon_n\to 0.$ 
   Hence, by Lemma \ref{lem3.12} and Proposition \ref{lemdecay}, for some $C, c>0$ independent of   $n$ and any $R>0$, there hold 
 \begin{gather}
   \|u_n\|_{\varepsilon_n},\ \int_{\R^N} f(u_n)u_n ,\ \int_{\R^N}F(u_n), \Phi_{\e_n}(u_n) \leq C  \notag\\
    \int_{\R^N\setminus   B(y_n,R) } \left(|\nabla u_n|^2+  u_n^2 \right) \mathrm dx\leq C e^{-cR}+o_n(1).\label{eq39}
 \end{gather}
 
 Up to a subsequence, we assume  
 $\varepsilon_n y_n\to y_0\in {O^{3\delta_0}}$ and $ u_n(\cdot+y_n)\rightharpoonup W \neq 0$ in $H^1(\R^N)$.
 By Lemma \ref{Phi'}, \eqref{eq39},   there holds
 \begin{equation}\label{eq3.4}
   \Phi_{\varepsilon_n}'(u_n)v-4\Phi_{\varepsilon_n}(u_n)^{\frac12}\e^{-\frac12}\int_{\R^{N} }\chi_{\e_n,u_n} u_nv =o_n(1)\|v\|_{\e_n},\ \ v\in H_{\varepsilon_n}.
   \end{equation}
For any $\varphi\in C_0^{\infty}(\R^N)$, setting $v=\varphi(\cdot-y_n)$,
we can verify that 
 $W$ satisfies
 $$-\Delta W + V(y_0)W=f(W) \quad \text{in}\  \R^N.
 $$

 \noindent\textbf{Step 1.} 
 Setting $v_n:=u_{n}-  (\phi_{\e_n} W)  (\cdot-y_{n}  )$, we show $|v_n|_p\to0$ for $p\in(2,2^*)$.
  
 Otherwise, by Lions' Lemma,
 there is $y_n'$ such that $|y_n'-y_n |\to\infty$  
 and $\limsup_{n\to\infty}\|u_n(\cdot+y_n')\|_{L^2(B(0,1))}>0$.
Assume $u_n(\cdot+ y_n')\rightharpoonup u_1$ in $H^1(\R^N)$ for $u_1\neq 0$.
For each $R>0$, let   $\eta_R\in C^\infty_0(\R^N,[0,1])$  be such that  $\eta_R=1$ in $B(y_n',R)$, $\eta_R=0$ in $\R^N\setminus B(y_n',2R)$ and $|\nabla \eta_R|\leq 2/R $.
 We have $\supp (\eta_R u_n) \subset \R^N \setminus B(\Upsilon(u_n), 4R_0)$ for large $n$.
 Then by Lemma \ref{Phi'},
 \[
  \Phi_{\varepsilon_n}'(u_n)(\eta_R u_n)= 4\Phi_{\varepsilon_n}(u_n)^{\frac12}\e_n^{-\frac12} \int_{\R^{N} }\chi_{\e_n,u_n}       \eta_R u_n^2 \mathrm dx \ge 0.
 \]
 Therefore,
 $$\begin{aligned}
   o_n(1)=&\Gamma_{\varepsilon_n}'(u_n)(\eta_R u_n) \\
\ge&\int_{\R^N}\left(\nabla u_n\nabla (\eta_R u_n) +  V_\varepsilon \eta_R u_n^2-\eta_R f(u_n) u_n \right) \mathrm d x 
   \\
 \geq & \int_{\R^N}\eta_R\left(|\nabla u_n|^2  +    u_n^2-  f(u_n) u_n \right) \mathrm d x - \frac{2}R \int_{\R^N} |u_n \nabla u_n|   \mathrm d x.
 \end{aligned}
 $$
 Since $u_n(\cdot+ y_n')\rightharpoonup u_1$, by compact embedding we have 
 \[
  \int_{\R^N} \eta_R f(u_n) u_n \to \int_{\R^N} \eta_R f(u_1) u_1.
 \]
 By Fatou's Lemma, we have 
 \[\liminf_{n\to +\infty}\int_{\R^N}\eta_R\left(|\nabla u_n|^2  +    u_n^2\right) \geq \int_{\R^N}\eta_R\left(|\nabla u_1|^2  +    u_1^2\right)
  \]
  Therefore, taking limits as $n\to+\infty$ and by the arbitrary choice of $R$, we obtain 
  \[
  \|u_1\|^2\leq \int_{\R^N} f(u_1)u_1.  
  \]
 
  By Lemma \ref{lemma26},  $|u_1|_2\geq \rho_2>\rho_0$
 which contradicts with \[|u_1|_2= \lim_{R\to+\infty}\lim_{n\to+\infty}\|u_n\|_{L^2(B(y_n', R))}= \lim_{R\to+\infty}\lim_{n\to+\infty}\|w_n\|_{L^2(B(y_n', R))}\leq \rho_0.\]
 
 \noindent{\bf Step 2.} $\| v_n\|_{\e_n}\to 0.$
 
 We test 
 \eqref{13} by $v_n$ and use \eqref{eq3.4} to get
 \begin{equation}\label{38'}
    \begin{aligned}
   (u_n, v_n)_{\e_n} - \int_{\R^N} f(u_n) v_n  
    +4\Phi_{ \varepsilon_n}(u_n)^{\frac12}\e_n^{-\frac12}\int_{\R^{N} }\chi_{\e_n,u_n} u_n v_n \mathrm dx
  =o_n(1).
   \end{aligned}
 \end{equation}
 By \eqref{eq39},
 $$\begin{aligned}
  4\Phi_{ \varepsilon_n}(u_n)^{\frac12}\e_n^{-\frac12}\int_{\R^{N} }\chi_{\e_n,u_n}  u_n v_n  \mathrm dx 
   \ge&-
   4\Phi_{ \varepsilon_n}(u_n)^{\frac12}\e_n^{-\frac12}\int_{\R^{N} \setminus B(y_n,  (2\e_n)^{-1/2}) }    u_n   W (\cdot-y_{n} )dx\\
 \geq& -C \e_n^{-\frac12}e^{-\frac{C}{\sqrt\e_n}}=o_n(1).
 \end{aligned}
 $$
 Hence, $  (u_n, v_n)_{\e_n} \le \int_{\R^N} f(u_n) v_n + o_n(1)$.
 Then  we have
 $$\begin{aligned}
   \|v_n\|^2_{ {\varepsilon_n}} 
   =\int_{\R^N} \left(\nabla    (\phi_{\e_n}W)  (\cdot-y_{n} )\nabla v_n + V_\varepsilon   \phi_{\e_n}  W  (\cdot-y_{n} )v_n\right)+\int_{\R^N} f(u_n)  v_n  +o_n(1).
 \end{aligned}
 $$
 We have, by $u_n(\cdot+y_n)-W\rightharpoonup 0$ in $H^1(\R^N)$  and the decay property of $W$,
 \[
  \int_{\R^N}  \nabla    (\phi_{\e_n}W)  (\cdot-y_{n} )\nabla v_n =\int_{\R^N}  \nabla    W \nabla (u_n(\cdot+y_n)-W) +o_n(1)=o_n(1),
 \]
 \[
  \int_{\R^N}|V_\varepsilon    (\phi_{\e_n}  W)  (\cdot-y_{n} )v_n|\leq \int_{\R^N} V_0 |W (u_n(\cdot+y_n)-W)|\ =o_n(1).
 \]
 By (F1) and \eqref{eq2.11},  for each $\tau>0$, there is $C_\tau>0$  such that $ f(t) \le \tau t+ C t^p$, for $p\in (2,2^*)$.  
  Hence, by Step 1, $\limsup_{n\to+\infty}\int_{\R^N} f(u_n) v_n\leq C\tau$ , and by the arbitrary choice of $\tau$, there holds $\lim_{n\to\infty} \|v_n\|^2_{ {\varepsilon_n}}=0$.
 
 \noindent\textbf{Step 3.} Completion of the proof.
 Let $z$ be the unique maximum point of $W$, 
 since \[\int_{\R^N\setminus B(0, 2R_0)}W^2=\lim_{n\to\infty}\int_{\R^N\setminus B(y_n, 2R_0)}u_n^2\leq \frac{\rho_1^2}{16}, \] 
 we have $|z|\leq 2R_0$.
 We set $U=W(\cdot+z)\in H^1(\R^N)$.
 By Step 2,
  we have 
 \[
 \lim_{n\to+\infty}\int_{\R^N}F(u_n)= \int_{\R^N} F(W )= \int_{\R^N} F(U ).
 \]
 Therefore,  
 $$\begin{aligned}
   L_{V(y_0)}(U)\leq &\lim_{n\to\infty} \Gamma_{\e_n}(u_n) \leq \ell E_{V_0}.
 \end{aligned}
 $$
 Then $U\in S_0$. Setting $z_{n}=y_n+z$, we have completed the proof.
 \end{proof}
\begin{proof}[\bf Proof of Proposition \ref{pro4.3}]
  Let $\{u_n\} $ be the (PS) sequence of $\Gamma_{\varepsilon}$ obtained in Proposition \ref{pro4.2} for given small $\varepsilon>0$. Clearly, $\|u_n\|_\varepsilon$ and $\Phi_\varepsilon(u_n)$ are bounded by a constant independent of $\varepsilon$.
  We have 
\begin{equation}\label{une}
\int_{\mathbb R^N\setminus B(\Upsilon(u_n), 2 \e^{-\frac12})}  u_n^2\leq C \e^{\frac12}.
\end{equation}
Moreover, $\Upsilon(u_n) \in \frac{1}{\e} O^{3\delta_0}$ is bounded for fixed $\e$.
Passing to a subsequence if possible, we may assume that there exists $u_{\e}\in H_{\e}$ such that
$u_n\rightharpoonup u_{\e}$ in $H_{\e}$, $u_n\rightarrow u_{\e}$ in $L^p_{loc}(\mathbb{R}^N), \forall
p\in[1,2^*), u_n(x)\rightarrow u_{\e}(x)$ for a.e. $x\in\mathbb{R}^N$. 
We next claim that $u_n\to u_\e$ in $L^p(\R^N)$ for any $p\in (2,2^*)$.
In fact, if not, by Lions' Lemma \cite{lions}, we may assume that 
$\lim\limits_{n\rightarrow+\infty}\sup\limits_{x\in\mathbb{R}^N}\int_{B (x, 1)}|u_n-u_{\e}|^2>0$.
Then there exists $\{y_n\} \subset \mathbb{R}^N$
 such that $\liminf\limits_{n\rightarrow+\infty}\int_{B (y_n, 1)}|u_n-u_{\e}|^2>0$. Clearly, $|y_n|\to+\infty$.
  Hence we have  
 \[
  \liminf_{n\rightarrow+\infty}\int_{B(y_n, 1)} u_n^2 >0.
 \]
 Since $u_n(\cdot+y_n)$ is bounded in $H^1(\R^N)$, we assume up to a subsequence that $u_n(\cdot+y_n)\rightharpoonup v$ in $H^1(\R^N)$ for some 
 $v\in H^1(\R^N)\setminus \{0\}$. By \eqref{une},  
 \begin{equation}\label{eqv}
  |v|_2^2\leq C\e^{\frac12}.
 \end{equation}
 For any $R>1$, take $\xi_{n,R}\in C^\infty(\R^N; [0,1])$ such that $\xi_{n,R}=1$ in $B(y_n, R)$, $\xi_{n,R}=0$ in $  B(y_n, 2R)$, and $|\nabla \xi_{n,R}|\leq 2/R$. Then $\{\xi_{n,R}u_n\}$ is bounded in $H_{\e}$ by a constant independent of $n, R$. Moreover,  for large $n$, $\supp(\xi_n u_n)\subset \R^N\setminus B(\Upsilon(u_n), 2R_0)$. We have 
\[
\Phi_\e'(u_n)(\xi_{n,R} u_n) =   4\Phi_{\varepsilon}(u_n)^{\frac12} \e^{-\frac12}\int_{\R^{N} }\chi_{\e, u_n} \xi_{n,R} u_n^2\geq 0.
\]
Hence, 
\[
o_n(1)= \Gamma_\e'(u_n) (\xi_{n, R} u_n ) \geq   \int_{\R^N} \nabla u_n \nabla (\xi_{n,R} u_n) +V_\e\xi_{n,R} u_n^2 -\int_{\R^N} f(u_n)\xi_{n,R} u_n.
\]
We have 
\[
  \liminf_{n\to+\infty}\int_{\R^N} \nabla u_n \nabla (\xi_{n,R} u_n) +V_\e\xi_{n,R} u_n^2 \geq   \int_{B(0,R)} |\nabla v|^2+v^2 -\frac{C}R,
\]
\[
  \limsup_{n\to+\infty} \int_{\R^N} f(u_n)\xi_{n,R} u_n \leq  \limsup_{n\to+\infty} \int_{B(y_n, 2 R)} |f(u_n)  u_n|= \int_{B(0, 2 R)} |f(v)  v|.
\]
By the arbitrary choice of $R$, we obtain 
\[
\|v\|^2\leq \int_{\R^N}|f(v)v|.  
\]
Hence, by Lemma \ref{lemma26}, we have $|v|_2\geq \rho_2$. Then we can get a contradiction to \eqref{eqv} for each $\e\in(0, \e_5)$
 provided that $\e_5$ is fixed sufficiently small.
\end{proof}

\section*{Acknowledgements}
The author expresses sincere gratitude to Prof. Kazunaga Tanaka, Prof. Xu Zhang, and Dr. Haoyu Li for their invaluable  suggestions and   comments.

\end{document}